\begin{document}
\baselineskip 17pt
\newtheorem{Theorem}{Theorem}[section]
\newtheorem{Lemma}[Theorem]{Lemma}
\newtheorem{Proposition}[Theorem]{Proposition}
\newtheorem{Example}[Theorem]{Example}
\newtheorem{Definition}[Theorem]{Definition}
\newtheorem{Corollary}[Theorem]{Corollary}
\newtheorem{Remark}[Theorem]{Remark}
\newtheorem{Question}[Theorem]{Question}

\title{On finite groups with some primary subgroups satisfying partial $S$-$\Pi$-property\thanks{Research is supported by an NNSF of China (grant No. 11371335) and Wu Wen-Tsuu Key Laboratory of Mathematics of Chinese Academy of Sciences. The first author is also supported by the Start-up Scientific Research Foundation of Nanjing Normal University (grant No. 2015101XGQ0105) and a project funded by the Priority Academic Program Development of Jiangsu Higher Education Institutions.}}

\author{{Xiaoyu Chen$^{1}$, Yuemei Mao$^{2,3}$, Wenbin Guo$^{2,}$\thanks{Corresponding author.}}\\
{\small $^1$School of Mathematical Sciences and Institute of Mathematics, Nanjing Normal University,}\\
{\small Nanjing 210023, P. R. China}\\
{\small $^2$School of Mathematical Sciences, University of Science and Technology of China,}\\
{\small Hefei 230026, P. R. China}\\
{\small $^3$School of Mathematics and Computer Science, University of Datong of Shanxi,}\\
{\small Datong 037009, P. R. China}\\
{\small E-mails: jelly@njnu.edu.cn, maoym@mail.ustc.edu.cn, wbguo@ustc.edu.cn}}
\date{}
\maketitle

\begin{abstract}  A $p$-subgroup $H$ of a finite group $G$ is said to satisfy partial $S$-$\Pi$-property in $G$ if $G$ has a chief series $\Gamma_{G}: 1=G_{0}<G_{1}<\cdots<G_{n}=G$ such that for every $G$-chief factor $G_{i}/G_{i-1}$ $(1\leqslant i\leqslant n)$ of $\Gamma_{G}$, either $(H\cap G_{i})G_{i-1}/G_{i-1}$ is a Sylow $p$-subgroup of $G_{i}/G_{i-1}$ or $|G/G_{i-1}: N_{G/G_{i-1}}((H\cap G_{i})G_{i-1}/G_{i-1})|$ is a $p$-number. In this paper, we mainly investigate the structure of finite groups with some primary subgroups satisfying partial $S$-$\Pi$-property.
\end{abstract}

\let\thefootnoteorig\thefootnote
\renewcommand{\thefootnote}{\empty}

\footnotetext{Keywords: partial $S$-$\Pi$-property, partial $\Pi$-property, Sylow subgroups, supersoluble groups, $p$-nilpotent groups.}

\footnotetext{Mathematics Subject Classification (2010): 20D10, 20D20, 20D30.} \let\thefootnote\thefootnoteorig

\section{Introduction}

Throughout this paper, all groups considered are finite. $G$ always denotes a group, $p$ denotes a prime, and $|G|_{p}$ denotes the order of Sylow $p$-subgroups of $G$. Also, we use $\mathfrak{U}$ and $\mathfrak{N}$ to denote the classes of all supersoluble groups and nilpotent groups, respectively. \par

Recall that a subgroup $H$ of $G$ has the cover-avoidance property in $G$ or $H$ is called a $CAP$-subgroup of $G$ if either $H$ covers $L/K$ (i.e. $L\leq HK$) or $H$ avoids $L/K$ (i.e.  $H\cap L\leq K$) for each $G$-chief factor $L/K$. Also, a subgroup $H$ of $G$ is  said to be $S$-quasinormally embedded \cite{BalP} in $G$ if each Sylow subgroup of $H$ is also a Sylow subgroup of some $S$-quasinormal subgroup of $G$. The $CAP$-subgroups and $S$-quasinormally embedded subgroups play an important role in the study of the structure of finite groups, and have been investigated by many authors.  As a generalization of $CAP$-subgroup and $S$-quasinormally embedded subgroup, W. Guo, A. N. Skiba and N. Yang introduced the concept of
generalized $CAP$-subgroup \cite{GSY}: a subgroup $H$ of  $G$ is said to be a generalized $CAP$-subgroup of $G$  if for each $G$-chief factor $L/K$,
either $H$  avoids $L/K$ or the following hold: (1) If $L/K$ is non-abelian, then $|L:(H\cap L)K|$ is a $p'$-number for every $p\in \pi ((H\cap L)K/K)$;
(2)  If $L/K$ is a $p$-group, then $|G:N_{G}((H\cap L)K)|$ is a $p$-number.
The authors in \cite{GSY} showed that every $CAP$-subgroup and every $S$-quasinormally embedded
subgroup of $G$ are both a generalized $CAP$-subgroup of $G$, and the converse is not true. In connection with this, A. N. Skiba proposed the following question in Seminar of USTC, 2014:\par

\begin{Question}\label{que} {\bf (see also \cite[Chap. 1, Problem 6.14]{G3}).}  To study the structure of finite groups when the condition of every chief factor in the generalized $CAP$-subgroup is replaced by every chief factor in some chief series.
\end{Question}

The main objective of the paper is to give an answer to Question \ref{que}. For this purpose, we now introduce the following concept:\par

\begin{Definition}  A $p$-subgroup $H$ of $G$ is said to satisfy partial $S$-$\Pi$-property in $G$ if $G$ has a chief series $\Gamma_{G}: 1=G_{0}<G_{1}<\cdots<G_{n}=G$ such that for every $G$-chief factor $G_{i}/G_{i-1}$ $(1\leqslant i\leqslant n)$ of $\Gamma_{G}$, either $(H\cap G_{i})G_{i-1}/G_{i-1}$ is a Sylow $p$-subgroup of $G_{i}/G_{i-1}$ or $|G/G_{i-1}: N_{G/G_{i-1}}((H\cap G_{i})G_{i-1}/G_{i-1})|$ is a $p$-number.
\end{Definition}

It is clear that a $p$-subgroup $H$ of $G$ satisfy partial $S$-$\Pi$-property in $G$ if $H$ is a generalized $CAP$-subgroup of $G$. But the next example illustrates that the converse is not true.\par

\begin{Example}
 {\em Let $L_1=\langle a,b \,|\,a^5=b^5=1,ab=ba \rangle$ and $L_2=\langle a',b' \rangle$ be a copy of $L_1$. Let $\alpha$ be an automorphism of $L_1$ of order 3 satisfying that $a^\alpha=b$, $b^\alpha=a^{-1}b^{-1}$. Put $G=(L_1\times L_2)\rtimes \langle \alpha \rangle$ and $H=\langle a \rangle\times \langle a' \rangle$. Then $G$ has a minimal normal subgroup $N$ such that $H\cap N=1$. Note that $\mathit{\Gamma}_G:1<N<HN<G$ is a chief series of $G$. Then $H$ satisfies partial $S$-$\Pi$-property in $G$. But since $|G:N_G(H\cap L_1)|=|G:N_G(\langle a \rangle)|=3$, $H$ is not a generalized $CAP$-subgroup of $G$.\par}\end{Example}

Note also that, X. Chen and W. Guo in \cite{CX2} introduced the concept of partial $\Pi$-property: a subgroup $H$ of $G$ satisfies partial $\Pi$-property in $G$ if there exists a chief series $\Gamma_{G}: 1=G_{0}<G_{1}<\cdots<G_{n}=G$ of $G$ such that for every $G$-chief factor $G_{i}/G_{i-1}$ $(1\leqslant i\leqslant n)$ of $\Gamma_{G}$, $|G/G_{i-1}: N_{G/G_{i-1}}((H\cap G_{i})G_{i-1}/G_{i-1})|$ is a $\pi((H\cap G_{i})G_{i-1}/G_{i-1})$-number. It is easy to see that if a $p$-subgroup $H$ of $G$ satisfies partial $\Pi$-property in $G$, then $H$ satisfies partial $S$-$\Pi$-property in $G$. However, the converse does not hold in general.

\begin{Example}\label{e2}
 {\em Let $G=A_{5}$ and $H$ be a Sylow 5-subgroup of $A_{5}$, where $A_{5}$ is an alternative group of degree 5. Then it is easy to see that $H$ satisfies partial $S$-$\Pi$-property in $G$. However, since $|G: N_{G}(H)|$ is not a $5$-number, we have that $H$ does not satisfy partial $\Pi$-property in $G$.}\end{Example}

Let $\mathfrak{F}$ be a formation. The $\mathfrak{F}$-residual of $G$, denoted by $G^{\mathfrak{F}}$, is the smallest normal subgroup of $G$ with quotient in $\mathfrak{F}$. A $G$-chief factor $L/K$ is said to be $\mathfrak{F}$-central in $G$ if $L/K\rtimes G/C_{G}(L/K)\in \mathfrak{F}$. A normal subgroup $N$ of $G$ is called $\mathfrak{F}$-hypercentral in $G$ if either $N=1$ or every $G$-chief factor below $N$ is $\mathfrak{F}$-central in $G$. Let $Z_\mathfrak{F}(G)$ denote the $\mathfrak{F}$-hypercentre of $G$, that is, the product of all $\mathfrak{F}$-hypercentral normal subgroups of $G$. Moreover, the generalized Fitting subgroup $F^{\ast}(G)$ (resp. the generalized $p$-Fitting subgroup $F^{\ast}_p(G)$) of $G$ is quasinilpotent radical (resp. $p$-quasinilpotent radical) of $G$ (for details, see \cite [Chap. X]{HB1} and \cite{AE}). We denote the Fitting subgroup and the $p$-Fitting subgroup of $G$ by $F(G)$ and $F_p(G)$, respectively.\par

In this paper, we arrive at the following main results.

\begin{Theorem}\label{zh} Let $E$ and $X$ be normal subgroups of $G$ such that $F^{\ast}(E)\leq X\leq E$. Suppose that for any non-cyclic Sylow subgroup $P$ of $X$, every maximal subgroup of $P$ satisfies partial $S$-$\Pi$-property in $G$, or every cyclic subgroup of $P$ of prime order or order $4$ $($when $P$ is a non-abelian $2$-group$)$ satisfies partial $S$-$\Pi$-property in $G$. Then $E\leq Z_{\mathfrak{U}}(G)$.\end{Theorem}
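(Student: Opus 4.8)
The plan is to reduce the whole statement to the single inclusion $F^{\ast}(E)\leq Z_{\mathfrak{U}}(G)$, and then to verify this prime by prime. For the reduction I would use the known hypercentre criterion for a saturated formation $\mathfrak{F}$ with $\mathfrak{N}\subseteq\mathfrak{F}$: if $E$ is normal in $G$ and $F^{\ast}(E)\leq Z_{\mathfrak{F}}(G)$, then $E\leq Z_{\mathfrak{F}}(G)$. Taking $\mathfrak{F}=\mathfrak{U}$, it suffices to prove $F^{\ast}(E)\leq Z_{\mathfrak{U}}(G)$. This already explains the role of the auxiliary subgroup $X$: since $F^{\ast}(E)\leq X\leq E$ with all three normal in $G$, a short argument gives $F^{\ast}(X)=F^{\ast}(E)$ --- indeed $F^{\ast}(E)$ is a normal quasinilpotent subgroup of $X$, hence lies in $F^{\ast}(X)$, while $F^{\ast}(X)$ is characteristic in $X$, so normal and quasinilpotent in $E$, hence lies in $F^{\ast}(E)$. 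Thus I may pass to the triple $(G,X,X)$ and assume $X=E$, so that the hypothesis concerns the Sylow subgroups of $E$ itself while the target remains $F^{\ast}(E)$.

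Next I would argue by induction on $|G|$ with a counterexample of least order. The routine preliminary steps are that the partial $S$-$\Pi$-property of the relevant maximal (resp. cyclic of order $p$ or $4$) subgroups is inherited by quotients $G/N$, so that the inductive hypothesis applies to $G/N$ for a suitable minimal normal subgroup $N$; here the delicate point is that $F^{\ast}$ does not in general pass to quotients, so one must track $F^{\ast}(E/N)$ against $F^{\ast}(E)/N$ and choose $N$ inside $F^{\ast}(E)$. The genuinely structural step is to show that $F^{\ast}(E)$ has trivial layer, i.e. $F^{\ast}(E)=F(E)$. This is forced by the desired conclusion, since a component of $E$ would contribute non-abelian $G$-chief factors, and such factors can never be $\mathfrak{U}$-central (every chief factor below $Z_{\mathfrak{U}}(G)$ is cyclic of prime order). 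The work is to show that the hypothesis on maximal or minimal subgroups of a Sylow subgroup of $E$ is incompatible with the presence of a quasisimple component, which I would do through the preliminary lemmas describing how these subgroup conditions restrict the possible non-abelian composition factors.

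Once $F^{\ast}(E)=F(E)=\prod_{p}O_{p}(E)$, each $O_{p}(E)$ is a normal $p$-subgroup of $G$, and a Sylow $p$-subgroup of $E$ carries exactly the subgroup conditions assumed in the theorem. I would then invoke the local form of the argument: for a normal $p$-subgroup on which every maximal subgroup of a Sylow $p$-subgroup of $E$ (or every cyclic subgroup of order $p$ or $4$) satisfies partial $S$-$\Pi$-property in $G$, every $G$-chief factor inside it is cyclic, i.e. $O_{p}(E)\leq Z_{\mathfrak{U}}(G)$. Ranging over $p$ gives $F(E)=F^{\ast}(E)\leq Z_{\mathfrak{U}}(G)$, and the hypercentre criterion of the first paragraph yields $E\leq Z_{\mathfrak{U}}(G)$, contradicting the choice of counterexample.

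I expect the main obstacle to be the elimination of the layer of $F^{\ast}(E)$, together with the bookkeeping needed to keep $F^{\ast}$ under control across the inductive reductions; a cleaner alternative, which I would try if the layer argument becomes heavy, is to run the whole scheme one prime at a time through the generalized $p$-Fitting subgroup $F^{\ast}_{p}(E)$, quotienting out $O_{p'}(E)$ so that the Sylow-$p$ conditions bite directly. The local $p$-step itself should be comparatively standard, following the pattern of the classical maximal-subgroup (Srinivasan) and minimal-subgroup (It\^{o}--Buckley) theorems adapted to the partial $S$-$\Pi$-property.
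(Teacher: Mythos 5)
There is a genuine gap at the step you yourself identify as the main obstacle: the elimination of the layer of $F^{\ast}(E)$. Saying that a component ``would contribute non-abelian $G$-chief factors, and such factors can never be $\mathfrak{U}$-central'' is just a restatement of what you are trying to prove, and the promised argument that the subgroup conditions are ``incompatible with a quasisimple component'' is never supplied; this is precisely the hard part, and no preliminary lemma in sight delivers it directly. The paper avoids this issue entirely by a different mechanism: for the \emph{smallest} prime $p$ dividing $|X|$ one has $(|X|,p-1)=1$, so a Frobenius-type argument (Propositions \ref{1ma} and \ref{1mi}) shows $X$ itself is $p$-nilpotent; the normal $p$-complement $X_{p'}$ is normal in $G$, one passes to $G/X_{p'}$, and iterating over the primes in increasing order produces a Sylow tower of $X$ and, at each stage, $X/X_{p'}\leq Z_{\mathfrak{U}}(G/X_{p'})$ via Propositions \ref{1may} and \ref{1miy}. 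This yields $X\leq Z_{\mathfrak{U}}(G)$ outright (so in particular $X$ is soluble and the layer question never arises), and only then is the hypercentre criterion (Lemma \ref{FZ}) invoked, using $F^{\ast}(E)\leq X$. Your opening reduction via Lemma \ref{FZ} and the observation $F^{\ast}(X)=F^{\ast}(E)$ are fine, but they do not substitute for this solubility step.

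There is also a secondary defect in your local step. The hypothesis concerns maximal subgroups of a Sylow $p$-subgroup $P$ of $X$, whereas you propose to apply the normal-$p$-subgroup result to $O_{p}(E)$; maximal subgroups of $O_{p}(E)$ need not be intersections with maximal subgroups of $P$, and even when they are, the partial $S$-$\Pi$-property is not known to pass to such intersections. (The cyclic-subgroup branch does not suffer from this, since cyclic subgroups of $O_p(E)$ are cyclic subgroups of $P$.) The paper's order of operations is designed exactly to dodge this: after factoring out the normal $p$-complement, the image of $X$ \emph{is} the image of $P$, and Lemma \ref{ma1}(3) transfers the condition on maximal subgroups to the quotient. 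If you want to salvage your scheme, you should adopt the smallest-prime $p$-nilpotency reduction rather than attack the layer of $F^{\ast}(E)$ head-on.
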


\begin{Theorem}\label{pc} Let $E$ and $X$ be $p$-soluble normal subgroups of $G$ such that $F_p(E)\leq X\leq E$.
Suppose that $X$ has a Sylow $p$-subgroup $P$ such that every maximal subgroup of $P$ satisfies partial $S$-$\Pi$-property in $G$, or every cyclic subgroup of $P$ of prime order or order $4$ $($when $P$ is a non-abelian $2$-group$)$ satisfies partial $S$-$\Pi$-property in $G$. Then $E/O_{p'}(E)\leq Z_{\mathfrak{U}}(G/O_{p'}(E))$. \end{Theorem}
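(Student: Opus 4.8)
The plan is to argue by induction on $|G|$, proving the local statement directly, since unlike Theorem \ref{zh} only the single prime $p$ is controlled here and that result cannot be invoked. First I would reduce to the case $O_{p'}(E)=1$. Put $R=O_{p'}(E)$; then $R\leq F_p(E)\leq X$, and as $P$ is a $p$-group the image $PR/R\cong P$ is a Sylow $p$-subgroup of $X/R=XR/R$, with the maximal subgroups and the cyclic subgroups of prime order or order $4$ corresponding to those of $P$. Using the (standard) fact that the partial $S$-$\Pi$-property is inherited by passage to the quotient $G/R$, the triple $(G/R,E/R,X/R)$ satisfies the hypotheses, while $F_p(E/R)=F_p(E)/R$ and $O_{p'}(E/R)=1$; induction then gives $E/R\leq Z_{\mathfrak{U}}(G/R)$, which is exactly the assertion for $G$. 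So I may assume $O_{p'}(E)=1$, whence $D:=F_p(E)=O_p(E)=F^{*}_p(E)$ (the last equality because $E$ is $p$-soluble) and $C_E(D)\leq D$. If $E=1$ we are done, so $D\neq 1$. Since $C_E(D)\leq D=F^{*}_p(E)$, the $\mathfrak{U}$-hypercentre extension property for $p$-soluble normal subgroups reduces the theorem to the single claim $D\leq Z_{\mathfrak{U}}(G)$: the part of $E$ lying above $D$ is then dragged into $Z_{\mathfrak{U}}(G)$ automatically.

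To establish $D\leq Z_{\mathfrak{U}}(G)$ I would run a secondary induction and reduce to showing that a minimal normal subgroup $N$ of $G$ with $N\leq D$ is $\mathfrak{U}$-central, i.e.\ $|N|=p$; the quotient $G/N$ (after re-reducing its $O_{p'}$) then disposes of $D/N$. Here $N$ is an elementary abelian $p$-group with $N\leq P$. In the maximal-subgroup case, $P$ acts on the $\mathbb{F}_p$-space $N$ as a $p$-group, so there is a $P$-invariant maximal subgroup $N_1$ of $N$; choosing a maximal subgroup $P_1$ of $P$ with $P_1\cap N=N_1$ and applying the partial $S$-$\Pi$-property of $P_1$ to $N$, the alternative ``$P_1\cap N$ is a Sylow $p$-subgroup of $N$'' forces $N_1=N$, which is absurd, so $|G:N_G(N_1)|$ is a $p$-number; but $P\leq N_G(N_1)$ then forces $N_1\trianglelefteq G$, contradicting minimality of $N$ unless $N_1=1$, i.e.\ $|N|=p$. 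In the cyclic-subgroup case I would take $x\in N$ of order $p$ and test $\langle x\rangle$: either $\langle x\rangle=N$, giving $|N|=p$ at once, or $|G:N_G(\langle x\rangle)|$ is a $p$-number for every such $x$; fixing a Sylow $p'$-subgroup $Q$ of $G$, this says that the $Q$-invariant lines of $N$ meet every $G$-orbit of lines, and the classical minimal-subgroup/coprime-action analysis, combined with the irreducibility of $N$, forces $|N|=p$. (The order-$4$ hypothesis is not needed for the elementary abelian $N$ itself but enters the induction when controlling non-abelian $2$-sections.) Once $|N|=p$, $N$ is $\mathfrak{U}$-central because $G/C_G(N)$ embeds in the cyclic group $\mathrm{Aut}(C_p)$.

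Feeding this back through the induction yields $D\leq Z_{\mathfrak{U}}(G)$, and the extension property of the first paragraph then completes the proof. I expect two points to be the real obstacles. First, the chief series witnessing the partial $S$-$\Pi$-property depends on the subgroup chosen and need not pass through the prescribed $N$, so the step ``test the property against $N$'' genuinely rests on the preliminary inheritance and testing lemmas for the partial $S$-$\Pi$-property; extracting from them the clean dichotomy for a given minimal normal $p$-subgroup is the technical heart. Second, in the maximal-subgroup case one must handle the possibility $N\leq\Phi(P)$, where a maximal subgroup $P_1$ with $P_1\cap N=N_1$ need not exist, and in the cyclic-subgroup case one must rule out a faithful irreducible action of $G/C_G(N)$ of dimension greater than one; both are where the Frattini and coprime-action arguments must be carried out with care, and I anticipate the cyclic case being the harder of the two.
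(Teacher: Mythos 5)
Your outer reduction is sound and matches the paper's: kill $O_{p'}(E)$, observe that for $p$-soluble $E$ with $O_{p'}(E)=1$ one has $F^{*}(E)=F^{*}_p(E)=F_p(E)$, and invoke Skiba's theorem (Lemma \ref{FZ}) to reduce everything to placing the relevant normal $p$-subgroup inside $Z_{\mathfrak{U}}(G)$. But the core of the proof --- establishing that placement --- is exactly what you leave unproved, and the step you defer to ``preliminary inheritance and testing lemmas'' is not available. The partial $S$-$\Pi$-property of a subgroup $H$ only gives a dichotomy for the factors of \emph{some} chief series $\Gamma_G$ depending on $H$; there is no lemma (in this paper or implicit in the definition) that converts this into the clean alternative ``either $H\cap N$ is Sylow in $N$ or $|G:N_G(H\cap N)|$ is a $p$-number'' for a \emph{prescribed} minimal normal subgroup $N$. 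If $\Gamma_G$ begins with a different minimal normal subgroup $M$, the dichotomy for the factor containing $N$ concerns $(H\cap NM)M/M$ and says nothing about $N_G(H\cap N)$. Your maximal-subgroup argument and your counting of $Q$-invariant lines both presuppose this dichotomy for every line of $N$, so both collapse at this point. (A secondary issue: your induction passes to $G/N$, but the cyclic-subgroup hypothesis does not descend to quotients by $p$-subgroups --- Lemma \ref{ma1}(2) requires $N\leq H$ or $p\nmid |N|$ --- which is why the paper's Propositions \ref{1miy}, \ref{1mi} and the cyclic case of Proposition \ref{po} all induct \emph{downward}, onto a normal subgroup $K$ or $N$ with $G/K$ a chief factor, never onto $G/N$.)

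The paper closes this gap by a global reduction you do not have: in Proposition \ref{po} it passes to a minimal counterexample with $E=G$ in which every proper quotient (maximal case) or every maximal normal subgroup (cyclic case) is $p$-supersoluble. In the maximal case this forces $G$ to have a \emph{unique} minimal normal subgroup $N$ with $N\nleq\Phi(G)$, so that every chief series --- in particular the one witnessing the property for the chosen $P_1$ with $P=P_1N$ --- must begin with $N$, and only then does the dichotomy apply to $N$ itself. In the cyclic case the paper abandons the minimal-normal-subgroup analysis entirely: it takes $K$ with $G/K$ a chief factor, gets $K$ $p$-supersoluble by induction, deduces via Lemma \ref{BE} that $P\cap K\unlhd G$, applies Proposition \ref{1miy} to conclude $P\cap K\leq Z_{\mathfrak{U}}(G)$, and finishes by extension. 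Note also that the paper applies this to $X$ itself (whose Sylow $p$-subgroup is $P$), not merely to $F_p(E)$; your plan to conclude only about $D=F_p(E)$ while the hypothesis lives on the possibly much larger $P$ compounds the difficulty (e.g.\ $N\leq\Phi(P)$ cannot be excluded without the uniqueness reduction). As written, the proposal identifies the right skeleton but omits the arguments that constitute the actual proof.
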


All unexplained notation and terminology are standard, as in \cite {KT,G2,Bal}.\par

\section{Preliminaries}

Firstly, we present some basic properties of partial $S$-$\Pi$-property as follows.

\begin{Lemma}\label{ma1} Suppose that a $p$-subgroup $H$ of $G$ satisfies partial $S$-$\Pi$-property in $G$ and $N\unlhd G$.\smallskip

$(1)$ If $H\leq N$, then  $H$ satisfies partial $S$-$\Pi$-property in $N$.\smallskip

$(2)$ If either $N\leq H$ or $(p,|N|)=1$, then $HN/N$ satisfies partial $S$-$\Pi$-property in $G/N$.\smallskip

$(3)$ If every maximal subgroup of a Sylow $p$-subgroup $P$ of $G$ satisfies partial $S$-$\Pi$-property in $G$, then every maximal subgroup of $PN/N$ also satisfies partial $S$-$\Pi$-property in $G/N$.
\end{Lemma}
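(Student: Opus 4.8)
The plan is to prove all three parts by starting from a chief series $\Gamma_{G}:1=G_{0}<\cdots<G_{n}=G$ witnessing partial $S$-$\Pi$-property for $H$ and transporting it either to $N$ or to $G/N$, checking the defining dichotomy factor by factor. For (1), since $H\le N\unlhd G$, I would first intersect $\Gamma_{G}$ with $N$ to get the $G$-invariant series $\{G_{i}\cap N\}$ and refine it to an honest chief series $1=N_{0}<N_{1}<\cdots<N_{m}=N$ of $N$. The key identity is $H\cap(G_{i}\cap N)=H\cap G_{i}$, and, because $N\unlhd G$, each nontrivial factor $(G_{i}\cap N)/(G_{i-1}\cap N)$ is $G$-isomorphic to the $G$-chief factor $G_{i}/G_{i-1}$, the isomorphism carrying $(H\cap G_{i})(G_{i-1}\cap N)/(G_{i-1}\cap N)$ onto $(H\cap G_{i})G_{i-1}/G_{i-1}$. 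Now take a refined $N$-chief factor $N_{j}/N_{j-1}$ sitting inside such a factor, say with $A_{i-1}\le N_{j-1}<N_{j}\le A_{i}$ where $A_{i}:=G_{i}\cap N$. If the Sylow alternative holds at level $i$ in $G$, then via the isomorphism $(H\cap A_{i})A_{i-1}/A_{i-1}$ is a Sylow $p$-subgroup of $A_{i}/A_{i-1}$; since $N_{j-1},N_{j}$ are normal subgroups of $N$ lying between $A_{i-1}$ and $A_{i}$, the standard fact that a Sylow $p$-subgroup meets every normal subfactor in a Sylow $p$-subgroup yields $(H\cap N_{j})N_{j-1}/N_{j-1}\in\mathrm{Syl}_{p}(N_{j}/N_{j-1})$, so the Sylow alternative survives the refinement.

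The genuinely delicate case, and the step I expect to be the main obstacle, is when only the normalizer alternative holds at level $i$, that is, $|G/G_{i-1}:N_{G/G_{i-1}}((H\cap G_{i})G_{i-1}/G_{i-1})|$ is a $p$-number. Here one must compare the normalizer computed in $N/N_{j-1}$ with the one computed in the larger group $G/G_{i-1}$. I would regard $G_{i}/G_{i-1}$ as a group for $NG_{i-1}/G_{i-1}$: since $G_{i}/G_{i-1}$ is a $G$-chief factor and $NG_{i-1}/G_{i-1}\unlhd G/G_{i-1}$, a Clifford-type decomposition expresses $G_{i}/G_{i-1}$ as a product of $N$-chief factors that $G/G_{i-1}$ permutes transitively, and the refined factors $N_{j}/N_{j-1}$ are exactly these pieces. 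The transitive action lets one push the $p$-number index in $G/G_{i-1}$ down to a $p$-number index for $(H\cap N_{j})N_{j-1}/N_{j-1}$ in $N/N_{j-1}$; making this comparison precise, in particular controlling how $H\cap N_{j}$ sits inside each piece, is the technical heart of the lemma.

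For (2) I would project $\Gamma_{G}$ onto $G/N$ and delete repetitions, obtaining a chief series of $G/N$ whose nontrivial factors are the images of those $G_{i}/G_{i-1}$ with $G_{i}\not\le G_{i-1}N$; on such a factor $\overline{H}=HN/N$ meets $\overline{G_{i}}/\overline{G_{i-1}}$ through $H\cap G_{i}$. The two hypotheses are precisely what make the dichotomy transfer: if $(p,|N|)=1$ then $N$ is a $p'$-group, so passing to $G/N$ changes neither the Sylow $p$-subgroups of the chief factors nor the $p$-numbers measuring the normalizer indices; if $N\le H$ then $H$ covers every factor lying inside $N$, and on the surviving factors $\overline{H}$ behaves exactly as $H$ does, so in both situations each alternative is preserved.

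Finally, for (3) I would first record the correspondence of maximal subgroups: $PN/N\in\mathrm{Syl}_{p}(G/N)$, and its maximal subgroups are exactly the subgroups $MN/N$, where $M$ is a maximal subgroup of $P$ containing $P\cap N$, so that $M\cap N=P\cap N\in\mathrm{Syl}_{p}(N)$. Each such $M$ satisfies partial $S$-$\Pi$-property in $G$ by hypothesis, so it remains to show that $MN/N$ inherits it in $G/N$. I would run the projection argument of (2); although $N$ here is neither a $p'$-group nor contained in $M$, the replacement observation is that $M\cap N=P\cap N$ is a Sylow $p$-subgroup of $N$, whence $M$ covers the Sylow $p$-part of every chief factor of $G$ lying inside $N$. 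This is exactly the feature that the condition $N\le H$ supplied in (2), and it allows the factor-by-factor verification to go through, completing the proof.
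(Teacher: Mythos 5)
Parts (2) and (3) of your proposal are essentially the paper's argument (project the series, use $HN\cap XN=(H\cap X)N$, and for (3) the correspondence $T=P_{1}N$ with $P_{1}\cap N=P\cap N$), and your treatment of the Sylow alternative in (1) via Dedekind and the $G$-isomorphism $(G_{i}\cap N)/(G_{i-1}\cap N)\cong G_{i}/G_{i-1}$ is fine. The problem is precisely the step you yourself flag as ``the technical heart'': for the normalizer alternative in (1) you propose a Clifford-type decomposition of $G_{i}/G_{i-1}$ into $N$-chief factors permuted transitively by $G$ and then say the transitive action ``lets one push the $p$-number index down,'' without carrying this out. As written this is a genuine gap: the transitive action relates the various $N$-chief factors to one another, but it does not by itself produce a bound on $|N/N_{j-1}:N_{N/N_{j-1}}((H\cap N_{j})N_{j-1}/N_{j-1})|$ from a bound on $|G/G_{i-1}:N_{G/G_{i-1}}((H\cap G_{i})G_{i-1}/G_{i-1})|$, and you give no mechanism for ``controlling how $H\cap N_{j}$ sits inside each piece.''

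The missing idea is much more elementary than Clifford theory, and it is what the paper uses. Since $N\unlhd G$ and $H\leq N$, every element normalizing $(H\cap G_{i})G_{i-1}$ also normalizes $(H\cap G_{i})G_{i-1}\cap N=(H\cap G_{i})(G_{i-1}\cap N)$ (Dedekind), so $N_{G}((H\cap G_{i})G_{i-1})\leq N_{G}((H\cap G_{i})(G_{i-1}\cap N))$ and the latter still has $p$-power index in $G$; hence $|N:N_{N}((H\cap G_{i})(G_{i-1}\cap N))|$ is a $p$-number. Now for any refined $N$-chief factor $L/K$ with $G_{i-1}\cap N\leq K<L\leq G_{i}\cap N$, the terms $L$ and $K$ are normal in $N$, so every element of $N_{N}((H\cap G_{i})(G_{i-1}\cap N))$ normalizes $\bigl((H\cap G_{i})(G_{i-1}\cap N)\cap L\bigr)K=(H\cap L)K$, giving $N_{N}((H\cap G_{i})(G_{i-1}\cap N))\leq N_{N}((H\cap L)K)$ and hence the required $p$-number index directly. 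You should replace the Clifford sketch by this two-line normalizer containment; with that substitution the rest of your argument goes through.
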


\begin{proof}
By the hypothesis, we may assume that $G$ has a chief series $\Gamma_{G}: 1=G_{0}<G_{1}<\cdots<G_{n}=G$
such that for every $G$-chief factor $G_{i}/G_{i-1}$ $(1\leqslant i\leqslant n)$ of $\Gamma_{G}$, either $(H\cap G_{i})G_{i-1}/G_{i-1}$ is a Sylow $p$-subgroup of $G_{i}/G_{i-1}$ or $|G/G_{i-1}: N_{G/G_{i-1}}((H\cap G_{i})G_{i-1}/G_{i-1})|$ is a $p$-number.\smallskip

(1) Obviously, $\Gamma_{N}: 1=G_{0}\cap N\leq G_{1}\cap N\leq \cdots\leq G_{n}\cap N=N$ is a normal series of $N$. Let $L/K$ be an $N$-chief factor such that $G_{i-1}\cap N\leq K\leq L\leq G_{i}\cap N$ $(1\leqslant i\leqslant n)$. If $(H\cap G_{i})G_{i-1}/G_{i-1}$ is a Sylow $p$-subgroup of $G_{i}/G_{i-1}$, then $(H\cap G_{i})G_{i-1}/G_{i-1}$ is a Sylow $p$-subgroup of $(G_{i}\cap N)G_{i-1}/G_{i-1}$. We can deduce that $(H\cap G_{i})(G_{i-1}\cap N)/(G_{i-1}\cap N)$ is a Sylow $p$-subgroup of $(G_{i}\cap N)/(G_{i-1}\cap N)$. Hence $(H\cap L)K/K$ is a Sylow $p$-subgroup of $L/K$.
Now assume that $|G: N_{G}((H\cap G_{i})G_{i-1})|$ is a $p$-number. Then $|N: N_{N}((H\cap G_{i})(G_{i-1}\cap N))|$ is a $p$-number because $N_{G}((H\cap G_{i})G_{i-1})\leq N_{G}((H\cap G_{i})G_{i-1}\cap N)$. It is easy to see that $N_{N}((H\cap G_{i})(G_{i-1}\cap N))\leq N_N((H\cap L)K)$, and so $|N: N_{N}((H\cap L)K)|$ is a $p$-number.
Hence $H$ satisfies partial $S$-$\Pi$-property in $N$.\smallskip

(2) Note that if either $N\leq H$ or $(p,|N|)=1$, then $HN\cap XN=(H\cap X)N$ for every normal subgroup $X$ of $G$. Now consider the normal series $\Gamma_{G/N}: 1=G_{0}N/N\leq G_{1}N/N\leq \cdots\leq G_{n}N/N=G/N$ of $G/N$. For every normal section $G_iN/G_{i-1}N$, we have that $(HN\cap G_iN)G_{i-1}N=HG_{i-1}N\cap G_{i}N=(H\cap G_i)G_{i-1}N$. If $(H\cap G_{i})G_{i-1}/G_{i-1}$ is a Sylow $p$-subgroup of $G_{i}/G_{i-1}$, then $(HN\cap G_iN)G_{i-1}N/G_{i-1}N=(H\cap G_i)G_{i-1}N/G_{i-1}N$ is a Sylow $p$-subgroup of $G_{i}N/G_{i-1}N$. Now assume that $|G: N_{G}((H\cap G_{i})G_{i-1})|$ is a $p$-number. Then $|G: N_{G}((HN\cap G_{i}N)G_{i-1}N)|=|G: N_{G}((H\cap G_i)G_{i-1}N)|$ is a $p$-number. Therefore, $HN/N$ satisfies partial $S$-$\Pi$-property in $G/N$.\smallskip

(3) Let $T/N$ be any maximal subgroup of $PN/N$. Then there exists a maximal subgroup $P_{1}$ of $P$ such that $T=P_{1}N$ and $P_{1}\cap N=P\cap N$. It is easy to derive that $P_{1}N\cap XN=(P_{1}\cap X)N$ for any normal subgroup $X$ of $G$. With a similar argument as (2), we have that $T/N$ satisfies partial $S$-$\Pi$-property in $G/N$.\end{proof}

Let $P$ be a $p$-group. If $P$ is not a non-abelian $2$-group, then we use $\Omega(P)$ to denote the subgroup $\Omega_{1}(P)$. Otherwise, $\Omega(P)=\Omega_{2}(P)$.

\begin{Lemma}\label{zu} \textup{\cite[Lemma 2.12]{CX3}} Let $P$ be a normal $p$-subgroup of $G$ and $C$ a Thompson critical subgroup of $P$ \textup{(see \cite[\textit{p}. 186]{GO})}. If $P/\Phi(P)\leq Z_\mathfrak{U}(G/\Phi(P))$ or $C\leq Z_\mathfrak{U}(G)$ or $\Omega(P)\leq Z_\mathfrak{U}(G)$, then $P\leq Z_\mathfrak{U}(G)$.
\end{Lemma}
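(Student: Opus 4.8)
The plan is to read $Z_{\mathfrak U}(G)$ through the fact that $\mathfrak U$ is a saturated formation locally defined by $f(p)=\mathfrak A(p-1)$, so that a $p$-chief factor $L/K$ of $G$ is $\mathfrak U$-central precisely when $|L/K|=p$. I will use freely the two routine properties of the $\mathfrak U$-hypercentre: images of $Z_{\mathfrak U}(G)$ in quotients lie in the $\mathfrak U$-hypercentre of the quotient, and if a $G$-chief factor $N$ is $\mathfrak U$-central while $P/N\leq Z_{\mathfrak U}(G/N)$, then $P\leq Z_{\mathfrak U}(G)$. The three hypotheses are alternatives, so I will establish three implications, taking the Frattini condition $P/\Phi(P)\leq Z_{\mathfrak U}(G/\Phi(P))$ as the core and reducing the other two to it by means of the Thompson critical subgroup.

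For the core implication I would induct on $|P|$. If $\Phi(P)=1$ the hypothesis is the conclusion. Otherwise choose a minimal normal subgroup $N$ of $G$ with $N\leq P$. When $N\not\leq\Phi(P)$, the image $N\Phi(P)/\Phi(P)\cong N$ is a $G$-chief factor inside $P/\Phi(P)\leq Z_{\mathfrak U}(G/\Phi(P))$, so $|N|=p$; passing to $G/N$, the hypothesis is inherited since $\Phi(P)N/N\leq\Phi(P/N)$ and $\mathfrak U$-hypercentrality passes to quotients, and after inducting I lift back by the property quoted above. The genuine case is $N\leq\Phi(P)$: here $[P,N]<N$ forces $[P,N]=1$ by minimality, so $N\leq Z(P)$, and after passing to $G/N$ (where now $\Phi(P/N)=\Phi(P)/N$) and inducting I obtain $P/N\leq Z_{\mathfrak U}(G/N)$, leaving only the task of proving $|N|=p$. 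This is the main obstacle. I would settle it by a coprime eigenvalue count: every $p'$-element of $G$ acts on $P/\Phi(P)$ with all eigenvalues in $\mathbb F_p$ (it is semisimple and stabilises a series with factors of order $p$), and the eigenvalues of such an element on any section of $P$ are products of its eigenvalues on $P/\Phi(P)$ because $\Phi(P)=P^{p}[P,P]$; together with the fact that $p$-elements act unipotently, every element of $G/C_G(N)$ then acts on $N$ with eigenvalues in $\mathbb F_p$, and a faithful irreducible $\mathbb F_p[G/C_G(N)]$-module with this property must be one-dimensional, giving $|N|=p$.

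To reduce the remaining two hypotheses I would first show that every minimal normal subgroup $N$ of $G$ with $N\leq P$ lies in $C$. Indeed $[N,C]\leq[P,C]\leq Z(C)$ and $[N,C]\leq N$, so $N\cap C\neq1$ (otherwise $[N,C]=1$ and $N\leq C_P(C)=Z(C)\leq C$, forcing $N=1$); minimality then gives $N\leq C$, and since $N$ is elementary abelian, $N\leq\Omega_1(C)$ (for $p=2$, $N\leq\Omega_2(C)$). As $\Omega_1(C)$, resp. $\Omega_2(C)$, is contained in $\Omega(P)$, both hypothesis (c) and, trivially, hypothesis (b) reduce to the single statement that $\Omega_1(C)\leq Z_{\mathfrak U}(G)$ for $p$ odd, resp. $\Omega_2(C)\leq Z_{\mathfrak U}(G)$ for $p=2$, implies $P\leq Z_{\mathfrak U}(G)$. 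I would prove this by the same inductive scheme, the bottom chief factor now being automatically $\mathfrak U$-central since every minimal normal $N\leq P$ sits inside $\Omega_1(C)\leq Z_{\mathfrak U}(G)$.

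The delicate points in this last step, and where I expect the real work to lie, are two: transferring the triangularisability condition from $\Omega_1(C)$ up to $P$, and descending to the quotient $G/N$. For the first I would invoke Thompson's theorem that $C_{\mathrm{Aut}(P)}(\Omega_1(C))$, resp. $C_{\mathrm{Aut}(P)}(\Omega_2(C))$, is a $p$-group, so that a $p'$-element acting trivially on $\Omega_1(C)$ acts trivially on $P$, and more finely that its eigenvalues on $P$ are governed by those on $\Omega_1(C)$; the subtlety is that $\mathfrak U$-centrality is an eigenvalue (order-$p$-factor) condition rather than plain centralisation, so the critical-subgroup theorem must be combined with the eigenvalue bookkeeping of the core step. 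For the second, the critical subgroup does not pass verbatim to $P/N$, so I would replace $CN/N$ by a critical subgroup of $P/N$ and check, using $N\leq\Omega_1(C)$, that the hypothesis survives — a fiddly but routine verification that I would isolate as a separate auxiliary lemma.
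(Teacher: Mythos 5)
First, a point of comparison: the paper does not prove this lemma at all --- it is imported verbatim as \cite[Lemma 2.12]{CX3} --- so your attempt can only be judged on its own merits, not against an in-paper argument.

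Your plan has the right shape (reduce the critical-subgroup and $\Omega$ hypotheses to the Frattini one, then induct), but the step that powers everything is wrong. You close the core induction by asserting that a faithful irreducible $\mathbb{F}_p[A]$-module on which every element of $A$ acts with all eigenvalues in $\mathbb{F}_p$ must be one-dimensional. This is false: take $p\equiv 1\pmod 4$, fix $i\in\mathbb{F}_p$ with $i^2=-1$, and let $A\leq \mathrm{GL}_2(\mathbb{F}_p)$ be the dihedral group of order $8$ generated by $\mathrm{diag}(i,-i)$ and the coordinate swap. This action is faithful and irreducible, and every element has eigenvalues in $\{\pm1,\pm i\}\subseteq\mathbb{F}_p$. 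Worse, the multiset $\{1,-1\}$ really does occur as the spectrum of an involution acting $\mathfrak{U}$-hypercentrally on a two-dimensional $P/\Phi(P)$, so your ``products of eigenvalues on $P/\Phi(P)$'' bookkeeping is in principle unable to exclude this configuration: eigenvalue data alone cannot force $|N|=p$. What must be carried from $P/\Phi(P)$ down to a chief factor $N$ is group-theoretic, not spectral: $P/\Phi(P)\leq Z_{\mathfrak{U}}(G/\Phi(P))$ gives that $G/C_G(P/\Phi(P))$ has a normal $p$-subgroup (the stabiliser of the chain of chief factors) with quotient abelian of exponent dividing $p-1$; Burnside's theorem says $C_G(P/\Phi(P))/C_G(P)$ is a $p$-group, so $G/C_G(P)$ has the same structure; and for any $G$-chief factor $N\leq P$ the group $G/C_G(N)$ is an image of $G/C_G(P)$ with trivial $p$-core, hence abelian of exponent dividing $p-1$, hence $|N|=p$. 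In fact $P\leq Z_{\mathfrak{U}}(G)$ is \emph{equivalent} to $G/C_G(P)$ being $p$-by-(abelian of exponent dividing $p-1$), and once you have that, the other two hypotheses follow directly from the Thompson critical subgroup facts that $C_G(C)/C_G(P)$ and, via the exponent statement of Lemma \ref{3}, $C_G(\Omega(C))/C_G(P)$ are $p$-groups --- no chief-series induction, no descent of critical subgroups to quotients. As written, the two points you yourself flag as ``where the real work lies'' are exactly the points left unproved, and the eigenvalue mechanism meant to handle them does not.
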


The next lemma is evident.

\begin{Lemma}\label{pm} Let $p$ be a prime divisor of $|G|$ with $(|G|,p-1)=1$ and $N$ a normal subgroup of $G$ such that $|N|_p\leq p$. If $G/N$ is $p$-nilpotent, then $G$ is $p$-nilpotent.\par
\end{Lemma}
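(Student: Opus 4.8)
The plan is to reduce the $p$-nilpotency of $G$ to that of a suitable normal subgroup and then invoke Burnside's normal $p$-complement theorem. Since $G/N$ is $p$-nilpotent, it has a normal $p$-complement; I would write this complement as $K/N$, where $N\leq K\unlhd G$, so that $K/N$ is a $p'$-group while $G/K$ is a $p$-group. Because $|K/N|$ is coprime to $p$, the order computation gives $|K|_{p}=|N|_{p}\leq p$, and hence a Sylow $p$-subgroup $P$ of $K$ is cyclic of order at most $p$.

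The central step is to show that $K$ itself is $p$-nilpotent. If $|K|_{p}=1$ this is immediate, so assume $|P|=p$. The conjugation action of $N_{K}(P)$ on the cyclic group $P$ induces an embedding of $N_{K}(P)/C_{K}(P)$ into $\mathrm{Aut}(P)\cong \mathbb{Z}_{p-1}$; consequently $|N_{K}(P)/C_{K}(P)|$ divides both $|K|$ (a divisor of $|G|$) and $p-1$, so it divides $(|G|,p-1)=1$. Therefore $N_{K}(P)=C_{K}(P)$, that is, $P$ lies in the centre of its normalizer in $K$, and Burnside's normal $p$-complement theorem yields that $K$ is $p$-nilpotent.

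Finally I would lift the complement back up to $G$. Let $V$ be the normal $p$-complement of $K$; being the unique Hall $p'$-subgroup of $K$, it is characteristic in $K$ and hence normal in $G$. Then $|G/V|=|G/K|\cdot|K|_{p}$ is a power of $p$, so $V$ is a normal $p$-complement of $G$, and $G$ is $p$-nilpotent, as required.

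I expect the only genuinely substantive point to be the verification that $N_{K}(P)=C_{K}(P)$ from the coprimality hypothesis $(|G|,p-1)=1$, after which Burnside's theorem does the work; the passage from $G/N$ down to $K$ and the passage from $K$ back to $G$ are routine order computations, which is consistent with the text's remark that this lemma is evident.
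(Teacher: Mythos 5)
Your argument is correct: the reduction to the preimage $K$ of the normal $p$-complement of $G/N$, the observation that $|K|_p=|N|_p\leq p$, the use of $N_K(P)/C_K(P)\hookrightarrow\mathrm{Aut}(P)$ together with $(|G|,p-1)=1$ to apply Burnside's normal $p$-complement theorem, and the lift of the characteristic Hall $p'$-subgroup back to $G$ are all sound. The paper states this lemma without proof (``The next lemma is evident''), and your write-up is exactly the standard argument one would supply.
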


\begin{Lemma} \label{3} \textup{\cite [Lemma 2.11]{CX3}} Let $P$ be a $p$-group of nilpotent class at most $2$. Suppose that the exponent of $P/Z(P)$ divides $p$.\smallskip

$(1)$ If $p>2$, then the exponent of $\Omega(P)$ is $p$.\smallskip

$(2)$ If $P$ is a non-abelian $2$-group, then the exponent of $\Omega(P)$ is $4$.\end{Lemma}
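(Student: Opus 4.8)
The plan is to reduce everything to two standard facts about a group $P$ of nilpotent class at most $2$, namely that commutators are central and biadditive, and that for any $x,y\in P$ one has the collection formula
\[
(xy)^{n}=x^{n}y^{n}[y,x]^{n(n-1)/2}.
\]
First I would record that, since $P'\leq Z(P)$, the commutator map satisfies $[ab,c]=[a,c][b,c]$ and $[a,bc]=[a,b][a,c]$ (the usual identities $[ab,c]=[a,c]^{b}[b,c]$ collapse because $[a,c]\in Z(P)$), whence $[y^{m},x]=[y,x]^{m}$ for every integer $m$. Feeding the hypothesis $y^{p}\in Z(P)$ into this gives $[y,x]^{p}=[y^{p},x]=1$, so \emph{every} commutator of $P$ has order dividing $p$. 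The collection formula itself I would verify by a one-line induction on $n$, using centrality of $[y,x]$ together with $y^{n}x=xy^{n}[y,x]^{n}$; the exponent is governed by $n(n-1)/2+n=(n+1)n/2$. Recall that, by the convention fixed just before the statement, $\Omega(P)=\Omega_{1}(P)$ in part (1) and $\Omega(P)=\Omega_{2}(P)$ in part (2).

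For part (1), with $p$ odd, the key numerical point is that $p\mid n(n-1)/2$ when $n=p$, because $(p-1)/2$ is then an integer. Hence for any $x,y$ with $x^{p}=y^{p}=1$ the collection formula gives $(xy)^{p}=x^{p}y^{p}[y,x]^{p(p-1)/2}=1$, using also $[y,x]^{p}=1$. Thus the set $\{x\in P:x^{p}=1\}$ is closed under products (and trivially under inverses and contains the identity), so it is a subgroup; being generated by elements of order dividing $p$, it coincides with $\Omega(P)=\Omega_{1}(P)$, which therefore has exponent $p$.

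For part (2), with $p=2$ and $P$ a non-abelian $2$-group, I would run the same argument with $n=4$: here $[y,x]^{2}=1$ forces $[y,x]^{4\cdot 3/2}=[y,x]^{6}=1$, so $x^{4}=y^{4}=1$ implies $(xy)^{4}=1$. Consequently $\{x\in P:x^{4}=1\}$ is a subgroup equal to $\Omega(P)=\Omega_{2}(P)$, and its exponent divides $4$. To see that the exponent is exactly $4$, I would note that a group of exponent at most $2$ is abelian; since $P$ is non-abelian it has an element $z$ of order $2^{k}$ with $k\geq 2$, and then $z^{2^{k-2}}$ has order $4$ and lies in $\Omega_{2}(P)$.

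The only genuinely delicate point is the interplay between the exponent $n(n-1)/2$ and the order of commutators, and this is exactly where the two cases diverge: for odd $p$ the factor $[y,x]^{n(n-1)/2}$ already vanishes at $n=p$, giving exponent $p$, whereas for $p=2$ it survives at $n=2$ (there $n(n-1)/2=1$ is odd, so the set of involutions need not be a subgroup) but dies at $n=4$. This is precisely why one must pass from $\Omega_{1}$ to $\Omega_{2}$ and why the resulting exponent is $4$ rather than $2$. Everything else is routine commutator calculus in a class-$2$ group.
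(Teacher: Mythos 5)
Your argument is correct: in a class-$2$ group the identity $(xy)^{n}=x^{n}y^{n}[y,x]^{\binom{n}{2}}$ together with $[y,x]^{p}=[y^{p},x]=1$ (from $y^{p}\in Z(P)$) shows that the elements of order dividing $p$ (resp.\ dividing $4$) form a subgroup, and your observation that a non-abelian $2$-group must contain an element of order exactly $4$ settles that the exponent in part (2) is $4$ and not $2$. Note, however, that this paper does not prove the lemma at all --- it is imported verbatim as \cite[Lemma 2.11]{CX3} --- so there is no internal proof to compare against; your self-contained derivation via the standard collection formula is exactly the classical argument one would expect the cited source to use.
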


\begin{Lemma}\label{FZ}  \textup{\cite [Theorem B]{AN2}} Let $\mathfrak{F}$ be any formation and $E$ a normal subgroup of $G$. If $F^{\ast}(E)\leq Z_{\mathfrak{F}}(G)$, then $E\leq Z_{\mathfrak{F}}(G)$.\end{Lemma}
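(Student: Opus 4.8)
The plan is to reduce the statement to the $G$-chief factors lying strictly between $F^{\ast}(E)$ and $E$, and then to push upward the $\mathfrak{F}$-centrality that holds below $F^{\ast}(E)$ by exploiting the defining property of the generalized Fitting subgroup, namely $C_{E}(F^{\ast}(E))=Z(F^{\ast}(E))\leq F^{\ast}(E)$ (see \cite[Chap. X]{HB1}). Write $F=F^{\ast}(E)$; since $F$ is characteristic in $E$ and $E\unlhd G$, we have $F\unlhd G$. Refine a chief series of $G$ so that it passes through $F$ and through $E$. By hypothesis every $G$-chief factor below $F$ is $\mathfrak{F}$-central, so it suffices to prove that every $G$-chief factor $H/K$ with $F\leq K<H\leq E$ is $\mathfrak{F}$-central in $G$.

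First I would use the centralizer property to pin down the shape of such a factor and to compare centralizers. From $F\leq K$ we get $C_{H}(F)=H\cap C_{E}(F)\leq H\cap F\leq K$, so $H$ acts on $F$ with kernel inside $K$; mapping $H$ into $\mathrm{Aut}(F)$ and noting that the image of $K$ already contains $\mathrm{Inn}(F)=F/Z(F)$ (because $F\leq K$), the factor $H/K$ is isomorphic to a section of $\mathrm{Out}(F)$. A short commutator computation also shows $C_{G}(F)\leq C_{G}(H/K)$: an element centralizing $F$ has trivial image in $\mathrm{Aut}(F)$, hence centralizes every section of $\mathrm{Aut}(F)$ realized by conjugation, in particular $H/K$. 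Consequently $G/C_{G}(H/K)$ is an epimorphic image of $\bar{G}:=G/C_{G}(F)\leq\mathrm{Aut}(F)$.

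Next I would extract $\mathfrak{F}$-information from the hypothesis $F\leq Z_{\mathfrak{F}}(G)$. For every $G$-chief factor $T/S$ of $F$ the group $(T/S)\rtimes G/C_{G}(T/S)$ lies in $\mathfrak{F}$, and passing to the quotient by $T/S$ gives $G/C_{G}(T/S)\in\mathfrak{F}$; since $C_{G}(F)\leq C_{G}(T/S)$, this reads $\bar{G}/C_{\bar{G}}(T/S)\in\mathfrak{F}$. As a formation is closed under quotients and subdirect products, it follows that $\bar{G}/D\in\mathfrak{F}$, where $D=\bigcap_{T/S}C_{\bar{G}}(T/S)$ is the stability group of the chief series of $F$ inside $\bar{G}$ and is therefore nilpotent. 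Writing $F=F(E)\ast L$ as the central product of $F(E)$ and the layer $L$ of $E$, Schreier's conjecture forces the contribution of $L$ to $\mathrm{Out}(F)$ to be soluble, which is what controls the non-abelian sections.

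The hard part will be the final bridge: upgrading the information ``$G/C_{G}(H/K)$ is a quotient of $\bar{G}$ and $\bar{G}/D\in\mathfrak{F}$'' to the genuine $\mathfrak{F}$-centrality $(H/K)\rtimes G/C_{G}(H/K)\in\mathfrak{F}$, rather than mere membership of a quotient in $\mathfrak{F}$. This is exactly the step that cannot be handled through local (screen) data, since $\mathfrak{F}$ is only assumed to be a formation and need not be saturated; instead one must control how the nilpotent stability group $D$ acts on the section $H/K$ and combine this with Schreier's conjecture on the non-abelian sections coming from $L$. Once this is done, an induction on $|E|$ — applying the hypothesis to a minimal normal subgroup of $G$ contained in $F$ and to the corresponding quotient, where the delicate behaviour of $F^{\ast}$ under quotients must be watched — completes the proof.
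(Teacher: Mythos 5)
The paper offers no proof of this statement at all: it is imported verbatim as Theorem B of Skiba's paper \cite{AN2}, so there is no in-paper argument to compare yours with; your attempt has to be judged on its own. Its opening moves are sound: reducing to the $G$-chief factors $H/K$ with $F=F^{\ast}(E)\leq K<H\leq E$, and showing $C_{G}(F)\leq C_{G}(H/K)$ (which indeed follows from $C_{E}(F)=Z(F)\leq F$ via the three subgroups lemma), are correct and natural. But the proof stops exactly where the theorem begins. What you establish is that $G/C_{G}(H/K)$ is an epimorphic image of $\bar{G}=G/C_{G}(F)$, and that $\bar{G}$ has a nilpotent normal subgroup $D$ with $\bar{G}/D\in\mathfrak{F}$. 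For an arbitrary formation this yields nothing about the required membership $(H/K)\rtimes G/C_{G}(H/K)\in\mathfrak{F}$: a formation is closed only under quotients and subdirect products, so one must actually exhibit this semidirect product inside $QR_{0}$ of groups already known to lie in $\mathfrak{F}$, and your outline supplies no mechanism for that. Note that even the weaker claim $G/C_{G}(H/K)\in\mathfrak{F}$ does not follow (the image of $D$ need not vanish in that quotient), and even if it did, $X\in\mathfrak{F}$ does not imply $V\rtimes X\in\mathfrak{F}$ --- that is precisely the difference between a formation and a saturated (locally defined) one, which you correctly observe you cannot use. Since the whole content of the theorem is concentrated in this one implication, which you explicitly defer (``once this is done\dots''), the proposal is a framing of the problem rather than a proof.

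Two further cautions. The concluding appeal to induction on $|E|$ via a minimal normal subgroup $N\leq F^{\ast}(E)$ collides with the obstruction you yourself flag: $F^{\ast}(E)/N$ may be a proper subgroup of $F^{\ast}(E/N)$ (e.g.\ $E=S_{4}$, $N=V_{4}$ gives $F^{\ast}(E)/N=1$ but $F^{\ast}(E/N)\cong C_{3}$), so the inductive hypothesis $F^{\ast}(E/N)\leq Z_{\mathfrak{F}}(G/N)$ is not automatically available; dealing with this is a substantive part of any correct proof, not an afterthought. Also, the assertion that Schreier's conjecture makes the layer's contribution to $\mathrm{Out}(F)$ soluble is false as stated: the outer automorphism group of a central product of quasisimple groups also contains the permutations of the components, which need not be soluble. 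For a complete argument you should consult the proof of Theorem B in \cite{AN2}.
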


\begin{Lemma}\label{BE} \textup{\cite[Theorem 2.1.6]{ABl}} Let $G$ be a $p$-supersoluble group. Then the
derived subgroup $G'$ of $G$ is $p$-nilpotent. In particular, if $O_{p'}(G) = 1$, then $G$ has a
unique Sylow $p$-subgroup.\end{Lemma}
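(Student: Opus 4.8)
The plan is to exploit the defining feature of $p$-supersolubility: every chief factor of $G$ whose order is divisible by $p$ is cyclic of order $p$. First I would fix a chief series $1=G_{0}<G_{1}<\cdots<G_{n}=G$ and look at the $p$-chief factors. For such a factor $G_{i}/G_{i-1}\cong\mathbb{Z}_{p}$, the quotient $G/C_{G}(G_{i}/G_{i-1})$ embeds into $\mathrm{Aut}(\mathbb{Z}_{p})\cong\mathbb{Z}_{p-1}$, which is abelian; hence $G/C_{G}(G_{i}/G_{i-1})$ is abelian and therefore $G'\le C_{G}(G_{i}/G_{i-1})$. Thus $G'$ centralizes every $p$-chief factor of $G$, while it controls nothing about the $p'$-chief factors.

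Next I would descend to $G'$ by intersecting the chief series with $G'$, obtaining a normal (in $G'$) series $1=G'\cap G_{0}\le\cdots\le G'\cap G_{n}=G'$ whose successive factors embed $G$-equivariantly into the $G_{i}/G_{i-1}$. Each factor is then either a $p'$-group (when $G_{i}/G_{i-1}$ is a $p'$-group) or a factor of order dividing $p$ that is central in $G'$ (when $G_{i}/G_{i-1}$ is a $p$-group, since $[\,G'\cap G_{i},\,G'\,]\le[G_{i},G']\cap G'\le G'\cap G_{i-1}$). The heart of the proof is then a purely group-theoretic claim: if a finite group $H$ has a normal series each of whose factors is either a $p'$-group or a central factor of order $p$, then $H$ is $p$-nilpotent. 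I would prove this by induction on the length of the series, examining the bottom factor $H_{1}$. If $H_{1}$ is a $p'$-group, then $H/H_{1}$ is $p$-nilpotent by induction, and pulling the normal $p$-complement of $H/H_{1}$ back over the $p'$-group $H_{1}$ produces a normal $p$-complement of $H$. If instead $H_{1}\le Z(H)$ has order $p$, then $H/H_{1}$ is $p$-nilpotent, its normal $p$-complement lifts to a normal subgroup $M\unlhd H$ in which $H_{1}$ is a \emph{central} Sylow $p$-subgroup (as $|M|=p\,|M/H_{1}|$ with $|M/H_{1}|$ a $p'$-number); Burnside's normal $p$-complement theorem then gives $M=R\times H_{1}$ with $R=O_{p'}(M)$ characteristic in $M$, hence $R\unlhd H$, and $H/R$ is a $p$-group. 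Applying this claim to $H=G'$ shows $G'$ is $p$-nilpotent.

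For the ``in particular'' clause I would argue as follows. When $O_{p'}(G)=1$, the subgroup $O_{p'}(G')$ is characteristic in the normal subgroup $G'$, hence normal in $G$, and being a $p'$-group it lies in $O_{p'}(G)=1$. So the $p$-nilpotent group $G'$ has trivial normal $p$-complement and is therefore a $p$-group. Since $G/G'$ is abelian, its Sylow $p$-subgroup is normal, and as the $p$-group $G'$ is contained in every Sylow $p$-subgroup of $G$, pulling back shows the Sylow $p$-subgroup of $G$ is normal, i.e. unique.

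The routine parts are the action on $\mathbb{Z}_{p}$ and the equivariant intersection with $G'$. I expect the main obstacle to be the central-$p$-factor step of the inductive claim, where a central subgroup of order $p$ must be absorbed: the right tool is Burnside's theorem applied to the intermediate normal subgroup $M$ with its central Sylow $p$-subgroup, and some care is needed to verify that $R=O_{p'}(M)$ is normal in all of $H$ and that $H/R$ is a $p$-group.
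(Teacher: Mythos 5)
Your proof is correct. Note that the paper does not prove this lemma at all: it is quoted verbatim from \cite[Theorem 2.1.6]{ABl}, so there is no in-paper argument to compare against. Your argument is the standard one and all the steps check out: $p$-supersolubility forces $G/C_G(G_i/G_{i-1})$ to embed in the abelian group $\mathrm{Aut}(\mathbb{Z}_p)$ for every $p$-chief factor, so $G'$ centralizes all of them; intersecting the chief series with $G'$ then gives a normal series of $G'$ with $p'$-factors and central factors of order dividing $p$; and your inductive claim (Case A by pulling the complement back over a $p'$-bottom, Case B via Burnside applied to the intermediate normal subgroup $M$ with central Sylow $p$-subgroup $H_1$, giving $M=R\times H_1$ with $R=O_{p'}(M)$ characteristic in $M$ and $H/R$ a $p$-group) is sound. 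The ``in particular'' clause is also handled correctly. For what it is worth, the argument can be packaged more quickly using the standard fact that in a $p$-soluble group the intersection of the centralizers of the $p$-chief factors is $O_{p'p}(G)$: your first step shows $G'\le O_{p'p}(G)$, which is $p$-nilpotent, and subgroups of $p$-nilpotent groups are $p$-nilpotent. That shortcut replaces your inductive claim, but your self-contained version is equally valid.
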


\section{Proof of Main Results}

The following propositions are the main steps of the proof of Theorems \ref{zh} and \ref{pc}.\par

\begin{Proposition}\label{1may} Let $P$ be a normal $p$-subgroup of $G$. If every maximal subgroup of $P$ satisfies partial $S$-$\Pi$-property in $G$, then $P\leq Z_{\mathfrak{U}}(G)$.\end{Proposition}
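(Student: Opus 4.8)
The plan is to argue by a minimal counterexample: choose $(G,P)$ violating the conclusion with $|G|+|P|$ minimal, and aim to show that every $G$-chief factor lying in $P$ is cyclic of order $p$, which is exactly the assertion $P\leqslant Z_{\mathfrak{U}}(G)$. First I would dispose of the trivial possibilities: if $P=1$ or $G$ is a $p$-group the claim is immediate. Next I would reduce to the case that $P$ is elementary abelian. Since $\Phi(P)$ is the intersection of all maximal subgroups of $P$, every maximal subgroup $M$ of $P$ contains $\Phi(P)$, so by Lemma~\ref{ma1}(2) (applied with the normal subgroup $\Phi(P)\leqslant M$) each $M/\Phi(P)$ satisfies partial $S$-$\Pi$-property in $G/\Phi(P)$, and these exhaust the maximal subgroups of $P/\Phi(P)$. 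If $\Phi(P)\neq 1$, minimality gives $P/\Phi(P)\leqslant Z_{\mathfrak{U}}(G/\Phi(P))$, whence $P\leqslant Z_{\mathfrak{U}}(G)$ by Lemma~\ref{zu}, a contradiction. Thus I may assume $\Phi(P)=1$, so $P$ is an elementary abelian $p$-group.

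Now let $N$ be a minimal normal subgroup of $G$ contained in $P$. Applying Lemma~\ref{ma1}(2) again (with $N\leqslant M$ for each maximal subgroup $M$ of $P$ containing $N$, these being the preimages of the maximal subgroups of $P/N$), the hypothesis descends to $(G/N,P/N)$, so minimality yields $P/N\leqslant Z_{\mathfrak{U}}(G/N)$. The entire proof then hinges on the single claim that $|N|=p$: granting this, $N$ is a $G$-chief factor of prime order and hence $N\leqslant Z_{\mathfrak{U}}(G)$, so by the standard behaviour of the $\mathfrak{U}$-hypercentre under quotients, $Z_{\mathfrak{U}}(G/N)=Z_{\mathfrak{U}}(G)/N$, and $P/N\leqslant Z_{\mathfrak{U}}(G)/N$ forces $P\leqslant Z_{\mathfrak{U}}(G)$, the desired contradiction.

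To prove $|N|=p$ I would regard $N$ as an irreducible $\mathbb{F}_pG$-module and suppose for contradiction that $\dim_{\mathbb{F}_p}N=d\geqslant 2$. Fix a Sylow $p$-subgroup $G_p$ of $G$; acting on the nonzero $\mathbb{F}_p$-space $N$, it stabilizes a hyperplane $N_0<N$ (dually, $G_p$ fixes a nonzero functional), and since $d\geqslant 2$ this $N_0$ is a proper nonzero subspace with $G_p\leqslant \mathrm{Stab}_G(N_0)$. Choosing a complement of $N$ in $P$ and adjoining $N_0$, I obtain a maximal subgroup $M$ of $P$ with $M\cap N=N_0$. Let $\Gamma_G$ be a chief series witnessing the partial $S$-$\Pi$-property of $M$. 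Because $N$ is minimal normal, it is $G$-isomorphic to a chief factor $G_j/G_{j-1}$ of $\Gamma_G$ with $N\cap G_{j-1}=1$; moreover $[N,G_{j-1}]\leqslant N\cap G_{j-1}=1$, so $G_{j-1}$ acts trivially on $N$ and the normalizer condition over $G/G_{j-1}$ translates into a statement about the stabilizer in $G$ of the subspace $W_M\leqslant N$ corresponding to $(M\cap G_j)G_{j-1}/G_{j-1}$, where $W_M\supseteq M\cap N=N_0$. Since $\dim N/N_0=1$, either $W_M=N_0$ or $W_M=N$. In the favourable event $W_M=N_0$, the factor is not Sylow-covered, so $\mathrm{Stab}_G(N_0)$ has $p$-power index; as it already contains $G_p$ its index is also coprime to $p$, hence $\mathrm{Stab}_G(N_0)=G$, making $N_0$ a proper nonzero $G$-submodule of $N$ and contradicting irreducibility.

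The main obstacle is exactly the alternative $W_M=N$, i.e. the Sylow (covering) case for the factor $\cong N$: because the witnessing chief series $\Gamma_G$ is supplied by the definition and not under my control, I cannot a priori force $M$ to avoid covering this factor. I expect the resolution to require either a sharper choice of the maximal subgroup $M$ adapted to the position of $G_{j-1}$ inside $P$ (so that $(M\cap G_j)G_{j-1}$ genuinely meets $N$ in $N_0$), or a counting argument run over the whole family of hyperplanes of $N$: if the productive normalizer case held for every hyperplane, the $G$-orbits on the set of hyperplanes would all have $p$-power size, yet this set has cardinality $\equiv 1\pmod p$, forcing a $G$-fixed hyperplane and again contradicting irreducibility. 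Making one of these routes airtight—pinning down $W_M$ and eliminating the covering alternative—is where the real work lies.
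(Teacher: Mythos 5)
Your reductions (to $\Phi(P)=1$, to $P/N\leq Z_{\mathfrak{U}}(G/N)$ for a minimal normal $N\leq P$, and the choice of a hyperplane $N_0$ of $N$ normalized by a Sylow $p$-subgroup together with the maximal subgroup $M=N_0S$) all match the paper's argument and are correct. But the obstacle you flag at the end --- eliminating the covering alternative $W_M=N$ --- is a genuine gap, and neither of your suggested escape routes closes it: the counting argument over all hyperplanes presupposes that the normalizer alternative holds for every hyperplane, which is exactly what you cannot guarantee, and no cleverer choice of $M$ alone will help, since the witnessing chief series is not under your control.

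The missing idea is to establish first that $N$ is the \emph{unique} minimal normal subgroup of $G$ contained in $P$ and that $|N|>p$. This is cheap in the minimal counterexample: if $R\neq N$ were another one, minimality would give $P/R\leq Z_{\mathfrak{U}}(G/R)$, so the chief factor $NR/R\cong N$ would have order $p$; but $|N|=p$ combined with $P/N\leq Z_{\mathfrak{U}}(G/N)$ already yields $P\leq Z_{\mathfrak{U}}(G)$, a contradiction. Granting uniqueness, take $j$ minimal with $N\leq G_j$, so $G_j=G_{j-1}\times N$. Then $P\cap G_{j-1}$ is a normal subgroup of $G$ inside $P$; if nontrivial it would contain $N$, contradicting $N\cap G_{j-1}=1$. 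Hence $P\cap G_{j-1}=1$, and the covering case dies by Dedekind's law: $W_M=N$ means $N\leq MG_{j-1}$, whence $N\leq MG_{j-1}\cap P=M(G_{j-1}\cap P)=M$, contradicting $M\cap N=N_0<N$. With that case excluded, your normalizer argument (or, as in the paper, $N_0\leq MG_{j-1}\cap N<N$ forces $MG_{j-1}\cap N=N_0$, then $N_0\unlhd G$ gives $N_0=1$, i.e.\ $|N|=p$) finishes the proof. So the skeleton is right, but without the uniqueness step the argument does not go through.
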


\begin{proof} Suppose that this proposition is false, and let $(G,P)$ be a counterexample for which $|G|+|P|$ is  minimal. Then:\smallskip

(1) \textit{There is a unique minimal normal subgroup $N$ of $G$ contained in $P$, $P/N\leq Z_{\mathfrak{U}}(G/N)$ and $|N|>p$.}
\smallskip

Let $N$ be any minimal normal subgroup of $G$ contained in $P$. Then clearly, $(G/N, P/N)$ satisfies the hypothesis by Lemma \ref{ma1}(2), and so the choice of $(G,P)$ yields that $P/N\leq Z_{\mathfrak{U}}(G/N)$. If $|N|=p$, then $P\leq Z_{\mathfrak{U}}(G)$, which is impossible. Hence $|N|>p$. Now suppose that $G$ has a minimal normal subgroup $R$ contained in $P$ such that $N\neq R$. With a similar discussion as above, we obtain that $P/R\leq Z_{\mathfrak{U}}(G/R)$. It follows that $NR/R\leq Z_{\mathfrak{U}}(G/R)$, and so $|N|=p$, a contradiction.\smallskip

(2) \textit{$\Phi(P)=1$, and so $P$ is elementary abelian.}\smallskip

If $\Phi(P)\neq 1$, then by (1), $N\leq \Phi(P)$. This induces that $P/\Phi(P)\leq Z_{\mathfrak{U}}(G/\Phi(P))$ because $P/N\leq Z_{\mathfrak{U}}(G/N)$, and so $P\leq Z_{\mathfrak{U}}(G)$ by Lemma \ref{zu}. This contradiction shows that $\Phi(P)=1$, and so $P$ is elementary abelian.\par\smallskip

(3) \textit{The final contradiction.}\smallskip

Let $N_{1}$ be a maximal subgroup of $N$ such that $N_{1}$ is normal in some Sylow $p$-subgroup of $G$, say $G_{p}$. Then $P_{1}=N_{1}S$ is a maximal subgroup of $P$, where $S$ is a complement of $N$ in $P$. By the hypothesis, $G$ has a chief series $\Gamma_{G}: 1=G_{0}<G_{1}<\cdots<G_{n}=G$ such that for every $G$-chief factor $G_{i}/G_{i-1}$ $(1\leqslant i\leqslant n)$ of $\Gamma_{G}$, either $(P_1\cap G_{i})G_{i-1}/G_{i-1}$ is a Sylow $p$-subgroup of $G_{i}/G_{i-1}$ or $|G/G_{i-1}: N_{G/G_{i-1}}((P_1\cap G_{i})G_{i-1}/G_{i-1})|$ is a $p$-number. Obviously, there exists an integer $k$ $(1\leqslant k\leqslant n)$ such that $G_{k}=G_{k-1}\times N$. It follows from (1) that $P\cap G_{k-1}=1$. If $(P_{1}\cap G_{k})G_{k-1}/G_{k-1}$ is a Sylow $p$-subgroup of $G_{k}/G_{k-1}$, then $G_{k}=(P_{1}\cap G_{k})G_{k-1}$, and thus $N\leq G_{k}\leq P_{1}G_{k-1}$. This implies that $N\leq P_{1}G_{k-1}\cap P\leq P_{1}$, a contradiction. Now assume that $|G: N_{G}((P_{1}\cap G_{k})G_{k-1})|$ is a $p$-number. Then $|G: N_{G}(P_{1}G_{k-1}\cap N))|$ is a $p$-number. Since $N_1\leq P_{1}G_{k-1}\cap N<N$, we have that $N_{1}=P_{1}G_{k-1}\cap N$. Hence $N_{1}\unlhd G$ because $N_{1}\unlhd G_{p}$, and so $N_1=1$, which is also a contradiction. The proof is thus completed.\end{proof}

\begin{Proposition}\label{1ma} Let  $E$ be a normal subgroup of $G$ and $p$ a prime divisor of $|E|$ with $(|E|, p-1)=1$. Suppose that $E$ has a Sylow $p$-subgroup $P$ such that every maximal subgroup of $P$ satisfies partial $S$-$\Pi$-property in $G$. Then $E$ is $p$-nilpotent.\end{Proposition}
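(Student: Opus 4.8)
The plan is to argue by contradiction through a minimal counterexample $(G,E)$ for which $|G|+|E|$ is minimal, reduce by a chain of standard steps to the case where $G$ has a unique minimal normal subgroup $N$ with $G/N$ $p$-nilpotent, and then concentrate all the difficulty on proving $|N|_p=p$, after which Lemma \ref{pm} closes the argument. First I would reduce to $E=G$: since every maximal subgroup $P_1$ of $P$ lies in $P\leq E$ with $E\unlhd G$, Lemma \ref{ma1}(1) shows $P_1$ satisfies partial $S$-$\Pi$-property already in $E$, so $(E,E)$ inherits the hypotheses, and if $E<G$ minimality forces $E$ to be $p$-nilpotent, a contradiction. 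Next I would show $O_{p'}(G)=1$: passing to $\overline G=G/O_{p'}(G)$, Lemma \ref{ma1}(3) transfers the hypothesis on maximal subgroups of $P$ to those of $PO_{p'}(G)/O_{p'}(G)\in\mathrm{Syl}_p(\overline G)$, so by minimality $\overline G$ is $p$-nilpotent, whence $G$ is $p$-nilpotent because $O_{p'}(G)$ is a $p'$-group.

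For any minimal normal subgroup $N$ of $G$, either $P\leq N$ (so $G/N$ is a $p'$-group) or $p\mid|G/N|$ and Lemma \ref{ma1}(3) with minimality applied to $(G/N,G/N)$ gives $G/N$ $p$-nilpotent; in both cases $G/N$ is $p$-nilpotent. Since the $p$-nilpotent groups form a formation, two distinct minimal normal subgroups would force $G$ itself to be $p$-nilpotent (their intersection being trivial), so $N$ is the unique minimal normal subgroup, and $p\mid|N|$ as $O_{p'}(G)=1$.

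The crux is $|N|_p=p$. Set $P_0=P\cap N\in\mathrm{Syl}_p(N)$. The uniform idea is to produce a maximal subgroup $P_1$ of $P$ with $N_1:=P_1\cap N$ of index $p$ in $P_0$ and normal in $P$; then in a chief series witnessing the partial $S$-$\Pi$-property of $P_1$ the first factor is $N/1$, where the ``Sylow'' alternative fails (as $N_1$ is not a Sylow $p$-subgroup of $N$), so $|G:N_G(N_1)|$ is a $p$-number, while $N_1\unlhd P$ gives $P\leq N_G(N_1)$, forcing $N_1\unlhd G$. When $N$ is an elementary abelian $p$-group I would first note $N\not\leq\Phi(G)$ — otherwise $G/\Phi(G)$, a quotient of the $p$-nilpotent $G/N$, is $p$-nilpotent, and saturation of the formation of $p$-nilpotent groups makes $G$ $p$-nilpotent — so $G=N\rtimes M$ for a maximal subgroup $M$; after replacing $P$ by a conjugate (harmless, since the property is conjugation-invariant) I may take $P=NM_p$ with $M_p\in\mathrm{Syl}_p(M)$, pick $N_1\unlhd P$ maximal in $N$, and set $P_1=N_1M_p$, so that $P_1\cap N=N_1$ and $N_1\unlhd G$ contradicts minimality of $N$ unless $|N|=p$. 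When $N$ is non-abelian it is not $p$-nilpotent, so by Tate's normal $p$-complement criterion $P_0\not\leq\Phi(P)$; choosing $P_1$ maximal in $P$ with $P_0\not\leq P_1$ makes $N_1=P_1\cap N$ a proper $P$-invariant subgroup of $P_0$, and $N_1\unlhd G$ with $N_1<N$ forces $N_1=1$, i.e. $|N|_p=|P_0|=p$.

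I expect this last step to be the main obstacle: the delicate point is engineering a single maximal subgroup $P_1$ of the \emph{whole} Sylow subgroup $P$ whose trace on $N$ is simultaneously proper in $P_0$ and normal in $P$, which is exactly where the Frattini obstruction $P_0\leq\Phi(P)$ must be defeated — by the complemented structure in the abelian case and by a classical $p$-nilpotency (Tate) criterion in the non-abelian case (for odd $p$ the Odd Order Theorem already excludes a non-abelian $N$, since $(|G|,p-1)=1$ forces $|G|$ odd). Once $|N|_p=p$ is established, Lemma \ref{pm} applied to $N\unlhd G$ with $G/N$ $p$-nilpotent and $(|G|,p-1)=1$ yields that $G$ is $p$-nilpotent, the final contradiction.
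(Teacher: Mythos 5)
Your proposal is correct and follows essentially the same route as the paper: minimal counterexample, reduction to $E=G$ with $O_{p'}(G)=1$ and a unique minimal normal subgroup $N$ with $G/N$ $p$-nilpotent, then a maximal subgroup $P_1$ of $P$ with $P_1\cap N$ proper in $P\cap N$ and normal in $P$, so that the partial $S$-$\Pi$-property forces $P_1\cap N\unlhd G$, hence $|N|_p=p$, and Lemma~\ref{pm} finishes. The only difference is cosmetic: the paper avoids your abelian/non-abelian case split by applying Tate's criterion ($P\cap N\leq\Phi(P)$ implies $N$ is $p$-nilpotent) uniformly, combined with $\Phi(G)=1$.
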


\begin{proof}
Suppose that this proposition is false, and let $(G,E)$ be a counterexample for which $|G|+|E|$ is  minimal. Now we proceed the proof via the following steps.\smallskip

(1) \textit{$O_{p'}(E)=1$.}\smallskip

If $O_{p'}(E)\neq1$, then by Lemma \ref{ma1}(2), $(G/O_{p'}(E),E/O_{p'}(E))$ satisfies the hypothesis. The choice of $(G,E)$ yields that $E/O_{p'}(E)$ is $p$-nilpotent, and so $E$ is $p$-nilpotent, a contradiction. Hence $O_{p'}(E)=1$.\smallskip

(2) \textit{$E=G$.}\smallskip

Assume that $E<G$. Then by Lemma \ref{ma1}(1), the hypothesis holds for $(E,E)$. By the choice of the $(G,E)$, $E$ is $p$-nilpotent. This contradiction shows that $E=G$.

\smallskip
(3) \textit{$G$ has a unique minimal normal subgroup $N$, $G/N$ is $p$-nilpotent and $\Phi(G)=1$.}\smallskip

Let $N$ be any minimal normal subgroup of $G$. Then by Lemma \ref{ma1}(3), the hypothesis still holds for $(G/N, G/N)$. The choice of $(G,E)$ yields that $G/N$ is $p$-nilpotent. Hence $N$ is the unique minimal normal subgroup of $G$, and it is easy to see that $\Phi(G)=1$.\smallskip

(4) \textit{The final contradiction.}\smallskip

If $P\cap N\leq \Phi(P)$, then by \cite [Chap. IV, Satz 4.7]{HB}, $N$ is $p$-nilpotent. Hence by (1), $N$ is a $p$-group, and so $N\leq \Phi(G)=1$, which is impossible. Thus $P\cap N\nleq \Phi(P)$. Then $P$ has a maximal subgroup $P_{1}$ such that $P=P_1(P\cap N)$. By the hypothesis and (3), $G$ has a chief series $\Gamma_{G}: 1=G_{0}<G_{1}=N<\cdots<G_{n}=G$ such that for every $G$-chief factor $G_{i}/G_{i-1}$ $(1\leqslant i\leqslant n)$ of $\Gamma_{G}$, either $(P_1\cap G_{i})G_{i-1}/G_{i-1}$ is a Sylow $p$-subgroup of $G_{i}/G_{i-1}$ or $|G/G_{i-1}: N_{G/G_{i-1}}((P_1\cap G_{i})G_{i-1}/G_{i-1})|$ is a $p$-number. Evidently, $P_{1}\cap N$ is not a Sylow $p$-subgroup of $N$. Therefore, $|G:N_{G}(P_{1}\cap N)|$ is a $p$-number. Since $P\leq N_{G}(P_{1}\cap N)$, we have that $P_{1}\cap N\unlhd G$. It follow that $P_{1}\cap N=1$, and thereby $|N|_p=p$. Then by  (3) and Lemma \ref{pm}, $G$ is $p$-nilpotent, a contradiction. This completes the proof.
\end{proof}

\begin{Proposition}\label{1miy} Let $P$ be a normal $p$-subgroup of $G$. If every cyclic subgroup of $P$ of prime order or order $4$ $($when $P$ is a non-abelian $2$-group$)$ satisfies partial $S$-$\Pi$-property in $G$, then $P\leq Z_{\mathfrak{U}}(G)$.\end{Proposition}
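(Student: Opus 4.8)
The plan is to argue by a minimal counterexample $(G,P)$ with $|G|+|P|$ minimal, running parallel to Proposition \ref{1may} but replacing the quotient induction (which is unavailable for cyclic subgroups) by a Sylow-centre argument. First I would record the self-improving feature of the hypothesis: if $Q\unlhd G$ with $Q\le P$, then every cyclic subgroup of $Q$ of prime order or order $4$ is such a subgroup of $P$ (and if $Q$ is a non-abelian $2$-group, so is $P$), so $(G,Q)$ again satisfies the hypothesis; hence by minimality every proper $G$-invariant subgroup $Q<P$ lies in $Z_{\mathfrak U}(G)$. Applying this to $\Omega(P)$, and to $\Omega(C)$ for a Thompson critical subgroup $C$ of $P$, together with Lemma \ref{zu} (used first for $C$ and then for $P$), forces $\Omega(C)=P$, whence $C=P$; now $P$ has nilpotent class at most $2$ with $P/Z(P)$ of exponent dividing $p$, so Lemma \ref{3} gives that $P$ has exponent $p$ when $p>2$ and exponent dividing $4$ when $p=2$. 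This exponent reduction is the crucial preparation, since it guarantees that every element of $P$ of order $p$ (or of order dividing $4$) lifts the order of its image in any $p$-quotient.

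The heart of the argument is the following Sylow-centre trick, which is clean and needs neither induction nor quotient passage. Let $N\le P$ be a minimal normal subgroup of $G$ and let $S$ be a Sylow $p$-subgroup of $G$; since $N\unlhd S$ we have $Z(S)\cap N\ne 1$, so I may pick $x\in Z(S)\cap N$ of order $p$ and set $L=\langle x\rangle$. By hypothesis $L$ has a witnessing chief series $\Gamma_G$; let $G_t/G_{t-1}$ be the factor in which $x$ first appears. As $N$ is an irreducible $G$-module, $G_t\cap N=N$ and $G_{t-1}\cap N=1$, so $G_t/G_{t-1}\cong N$ is a $p$-group and $(L\cap G_t)G_{t-1}/G_{t-1}$ has order $p$. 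Because $x\in Z(S)$, the Sylow $p$-subgroup $SG_{t-1}/G_{t-1}$ normalises this subgroup, so its index is a $p'$-number; the defining alternative of the hypothesis forces it to be either a Sylow $p$-subgroup of $G_t/G_{t-1}$ or to have $p$-number index, and in either case it must equal $G_t/G_{t-1}$. Hence $|N|=p$. In particular, if $P$ itself were minimal normal we would get $|P|=p$ and $P\le Z_{\mathfrak U}(G)$, against the choice of $(G,P)$; so $P$ is not minimal normal.

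To finish I would reduce to the case just excluded by climbing one chief factor. Choose a maximal $G$-invariant subgroup $Q<P$, so that $Q\le Z_{\mathfrak U}(G)$ by the first paragraph and $P/Q$ is a chief factor of $G$; it suffices to prove $|P/Q|=p$, for then $P\le Z_{\mathfrak U}(G)$ (the $\mathfrak U$-hypercentre absorbs a central chief factor of order $p$ on top of $Q$), a contradiction. Working in $\overline G=G/Q$ I pick $\overline x\in Z(\overline S)\cap(P/Q)$ of order $p$ and lift it, using the exponent reduction, to $x\in P$ of order $p$ with $[S,x]\le Q$. For $L=\langle x\rangle$ and its witnessing series I again examine the entry factor $G_t/G_{t-1}$: writing $\overline{G_t}=G_tG_{t-1}/G_{t-1}$ and distinguishing the two possibilities $\overline Q\cap\overline{G_t}\in\{1,\overline{G_t}\}$ for the minimal normal $\overline{G_t}$, together with $Q\le Z_{\mathfrak U}(G)$ (so that any chief factor lying inside $Q$ has order $p$) and the fact that $S$ normalises $LQ$, one shows $|G_t/G_{t-1}|=p$, and then relates this order-$p$ factor back to $P/Q$.

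The main obstacle is precisely this last relating step, and it is where the cyclic hypothesis genuinely differs from the maximal one. For maximal subgroups the relevant subgroups contain the minimal normal subgroup $N$, so Lemma \ref{ma1}(2),(3) transport the hypothesis to $G/N$ and one may simply induct; for cyclic subgroups of order $p$ no such transport exists (a subgroup of order $p$ meets a minimal normal $p$-subgroup $N$ trivially), and the witnessing chief series supplied by the hypothesis need not pass through $Q$. Consequently the level $t$ at which $x$ enters may sit below part of $Q=Z_{\mathfrak U}(G)\cap P$, and then the order-$p$ factor $G_t/G_{t-1}$ produced above is a \emph{misaligned} section of $P$ rather than $P/Q$ itself; ruling this out is the crux. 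I expect to resolve it by exploiting the exponent reduction to choose $x$ as deep in $Z(S)$ as possible and by absorbing the misaligned part into $Q\le Z_{\mathfrak U}(G)$ through a normaliser computation in the spirit of the final step of Proposition \ref{1may}, forcing $(L\cap G_t)G_{t-1}/G_{t-1}$ to be $G$-invariant of order $p$ and hence to realise $P/Q$ in every case.
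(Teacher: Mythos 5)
Your strategy is the paper's own in outline: minimal counterexample, reduction of the exponent of $P$ to $p$ or $4$ via a Thompson critical subgroup together with Lemmas \ref{zu} and \ref{3}, and then an element $x$ of order $p$ or $4$ whose image is central in a Sylow $p$-subgroup modulo the maximal $G$-invariant subgroup $Q<P$, tested against the witnessing chief series. The problem is that you stop at the decisive step: your last paragraph names the ``misaligned section'' difficulty as the crux and only says you \emph{expect} to resolve it, so as written the argument is incomplete exactly where it matters. (Your second paragraph, showing every minimal normal subgroup of $G$ inside $P$ has order $p$, is correct but ends up being unnecessary; also note that for $p=2$ the lift of $\overline{x}$ may have order $4$ rather than $2$, which is precisely why the hypothesis covers order $4$.)

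The missing idea is that the misalignment you fear cannot occur, and the reason is already in your first paragraph. By minimality every proper $G$-invariant subgroup of $P$ lies in $Z_{\mathfrak{U}}(G)$, so $P$ has a \emph{unique} maximal $G$-invariant subgroup $Q$ (two distinct ones would generate $P$ inside $Z_{\mathfrak{U}}(G)$), and hence every proper $G$-invariant subgroup of $P$ is contained in $Q$. Choose $xQ\in Z(S/Q)\cap P/Q$ of order $p$, set $L=\langle x\rangle$, and let $G_{t}$ be the first term of the witnessing series containing $x$. Then $G_{t}\cap P$ is a $G$-invariant subgroup of $P$ not contained in $Q$, so $P\leq G_{t}$, while $G_{t-1}\cap P$ is proper, so $G_{t-1}\cap P\leq Q$. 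Since $PG_{t-1}/G_{t-1}$ is a nontrivial normal subgroup of $G/G_{t-1}$ inside the chief factor $G_{t}/G_{t-1}$, we get $PG_{t-1}=G_{t}$; and $QG_{t-1}=G_{t}$ would give $P=Q(P\cap G_{t-1})=Q$, so $Q\leq G_{t-1}$ and $P\cap G_{t-1}=Q$. Thus $G_{t}/G_{t-1}\cong P/Q$ is a $p$-group and, by Dedekind, $LG_{t-1}\cap P=L(G_{t-1}\cap P)=LQ$. Now the two alternatives finish as in Proposition \ref{1may}: if $LG_{t-1}/G_{t-1}$ is a Sylow $p$-subgroup of $G_{t}/G_{t-1}$, then $P\leq LG_{t-1}$, whence $P=LQ$ and $|P/Q|=p$; otherwise $|G:N_{G}(LG_{t-1})|$ is a $p$-number, and since $N_{G}(LG_{t-1})\leq N_{G}(LG_{t-1}\cap P)=N_{G}(LQ)$ while $S\leq N_{G}(LQ)$, we get $LQ\unlhd G$, so $LQ=P$ and again $|P/Q|=p$. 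Either way $P\leq Z_{\mathfrak{U}}(G)$, the final contradiction. This is exactly the paper's step, with its unique subgroup $N$ playing the role of your $Q$; once you record the uniqueness of $Q$, your proof closes.
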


\begin{proof} Suppose that this proposition is false, and let $(G,P)$ be a counterexample for which $|G|+|P|$ is  minimal. Then:\smallskip

(1) \textit{$G$ has a unique normal subgroup $N$ such that $P/N$ is a $G$-chief factor, $N\leq Z_{\mathfrak{U}}(G)$ and $|P/N|>p$.}\smallskip

Let $P/N$ be a $G$-chief factor. Then $(G, N)$ satisfies the hypothesis, and the choice of $(G, P)$ implies that $N\leq Z_{\mathfrak{U}}(G)$. If $|P/N|=p$, then $P/N\leq Z_{\mathfrak{U}}(G/N)$, and so $P\leq Z_{\mathfrak{U}}(G)$, which is impossible.
Thus $|P/N|>p$. Now assume that $P/R$ is a $G$-chief factor with $N\neq R$. With a similar argument as above, we have that $R\leq Z_{\mathfrak{U}}(G)$. This yields that $P=NR\leq Z_{\mathfrak{U}}(G)$, a contradiction occurs. Therefore, $N$ is the unique normal subgroup of $G$ such that $P/N$ is a $G$-chief factor.\smallskip

(2) \textit{The exponent of $P$ is $p$ or $4$ $($when $P$ is a non-abelian $2$-group$)$.}\smallskip

Let $D$ be a Thompson critical subgroup of $P$. Then the nilpotent class of $D$ is at most $2$ and $D/Z(D)$ is elementary abelian by \cite[Chap. 5, Theorem 3.11]{GO}. If $\Omega(D)<P$, then $\Omega(D)\leq N\leq Z_{\mathfrak{U}}(G)$ by (1). It follows from Lemma \ref{zu} that $P\leq Z_{\mathfrak{U}}(G)$, against supposition. Thus $P=D=\Omega(D)$. Then by Lemma \ref{3}, the exponent of $P$ is $p$ or $4$ (when $P$ is a non-abelian $2$-group).

\smallskip
(3) \textit{The final contradiction.}\smallskip

Let $G_p$ be a Sylow $p$-subgroup of $G$. Since $P/N\cap Z(G_{p}/N)>1$, there exists a subgroup $T/N$ of $P/N\cap Z(G_{p}/N)$ of order $p$. Let $x\in T \backslash N$ and $H=\langle x\rangle$. Then $T=HN$ and $|H|=p$ or $4$ (when $P$ is a non-abelian $2$-group) by (2). By the hypothesis, $G$ has a chief series $\Gamma_{G}: 1=G_{0}<G_{1}<\cdots<G_{n}=G$ such that for every $G$-chief factor $G_{i}/G_{i-1}$ $(1\leqslant i\leqslant n)$ of $\Gamma_{G}$, either $(H\cap G_{i})G_{i-1}/G_{i-1}$ is a Sylow $p$-subgroup of $G_{i}/G_{i-1}$ or $|G/G_{i-1}: N_{G/G_{i-1}}((H\cap G_{i})G_{i-1}/G_{i-1})|$ is a $p$-number. Clearly, there exists an integer $k$ $(1\leqslant k\leqslant n)$ such that $P\leq G_{k}$ and $P\nleq G_{k-1}$.
Since $N$ is the unique normal subgroup of $G$ such that $P/N$ is a $G$-chief factor by (1), we have that $P\cap G_{k-1}\leq N$. If $G_{k}=NG_{k-1}$, then $P=N(P\cap G_{k-1})=N$. This contradiction forces that $N\leq G_{k-1}$, and so $P\cap G_{k-1}=N$.

Firstly suppose that $HG_{k-1}/G_{k-1}$ is a Sylow $p$-subgroup of $G_{k}/G_{k-1}$. Then $P\leq HG_{k-1}$. This implies that $P=H(P\cap G_{k-1})=HN=T$, and so $|P/N|=|T/N|=p$, which contradicts (1).
Now assume that $|G: N_{G}(HG_{k-1})|$ is a $p$-number, and so $|G: N_{G}(T)|$ is a $p$-number. Since $G_{p}\leq N_{G}(T)$, we have that $T\unlhd G$. It follows from (1) that $P=T$ because $H\neq N$, a contradiction also occurs. This ends the proof. \end{proof}

\begin{Proposition}\label{1mi} Let $E$ be a normal subgroup of $G$ and $p$ a prime divisor of $|E|$ with $(|E|, p-1)=1$. Suppose that $E$ has a Sylow $p$-subgroup $P$ such that every cyclic subgroup of $P$ of order $p$ or $4$ $($when $P$ is a non-abelian $2$-group$)$ satisfies partial $S$-$\Pi$-property in $G$. Then $E$ is $p$-nilpotent.\end{Proposition}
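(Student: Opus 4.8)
The plan is to argue by contradiction, choosing a counterexample $(G,E)$ with $|G|+|E|$ minimal, and to run the same two opening reductions as in Proposition \ref{1ma}. First I would show $O_{p'}(E)=1$: if not, then $PO_{p'}(E)/O_{p'}(E)$ is a Sylow $p$-subgroup of $E/O_{p'}(E)$ and, since $(p,|O_{p'}(E)|)=1$, Lemma \ref{ma1}(2) transports the cyclic-subgroup hypothesis to $(G/O_{p'}(E),E/O_{p'}(E))$; minimality then makes $E/O_{p'}(E)$ $p$-nilpotent, whence $E$ is $p$-nilpotent. Next I would show $E=G$: if $E<G$, then by Lemma \ref{ma1}(1) the hypothesis passes to $(E,E)$ and minimality again gives a contradiction. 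Thus we may assume $O_{p'}(G)=1$ and $E=G$, so that proving $p$-nilpotency now amounts to proving that $G$ is a $p$-group.

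The next step is to show that every minimal normal subgroup of $G$ is a $p$-group, so in particular $O_p(G)\ne 1$. A minimal normal $p'$-subgroup would lie in $O_{p'}(G)=1$, so the only alternative to a $p$-group is a nonabelian minimal normal subgroup $N=S_1\times\cdots\times S_t$ with $p\mid|N|$. If $N<G$ I would apply the proposition to the pair $(G,N)$ with Sylow $p$-subgroup $P\cap N$: its cyclic subgroups of order $p$ or $4$ are among those of $P$, so the hypothesis holds and minimality forces $N$ to be $p$-nilpotent, which is absurd for a direct product of nonabelian simple groups. The remaining possibility is that $G$ itself is nonabelian simple; here I would take $x$ of order $p$ in $Z(P)$ and set $H=\langle x\rangle$. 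Since $P\le C_G(x)\le N_G(H)$, the index $|G:N_G(H)|$ is coprime to $p$; as the only chief series is $1<G$, the partial $S$-$\Pi$-property forces either $H$ to be Sylow in $G$ or $|G:N_G(H)|$ to be a $p$-number, i.e.\ equal to $1$, so $H\unlhd G$ --- impossible --- unless $|P|=p$, in which case $(|G|,p-1)=1$ gives $N_G(P)=C_G(P)$ and Burnside's theorem makes $G$ $p$-nilpotent, again absurd.

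With $O_p(G)\ne 1$ in hand, I would invoke Proposition \ref{1miy} to get $O_p(G)\le Z_{\mathfrak U}(G)$. Because $(|G|,p-1)=1$, each $\mathfrak U$-central $p$-chief factor $L/K$ has $G/C_G(L/K)$ of order dividing $\gcd(|G|,p-1)=1$, so $G$ centralizes it; hence every $G$-chief factor below $O_p(G)$ is central and $O_p(G)\le Z_{\mathfrak N}(G)$. By the standard fact that the stabiliser of a central chain of a $p$-group is a $p$-group, $G/C_G(O_p(G))$ is a $p$-group. The proof will therefore be complete once we show $C_G(O_p(G))\le O_p(G)$: for then $C_G(O_p(G))=Z(O_p(G))$ is a $p$-group and $G$ is an extension of a $p$-group by a $p$-group, i.e.\ a $p$-group, contradicting the fact that $G$ is not $p$-nilpotent.

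The main obstacle is precisely this last point, namely forcing $F^{\ast}(G)=O_p(G)$ by excluding nonabelian (quasisimple) composition factors; note this cannot be done by orders alone, since e.g.\ $\mathrm{SL}_2(5)$ has all the ``right'' orders yet is excluded only because its cyclic subgroups of order $4$ fail the property. I would split on the parity of $p$. If $p$ is odd then $2\mid p-1$, so $(|G|,p-1)=1$ forces $|G|$ to be odd; by the Feit--Thompson theorem $G$ is soluble, hence $p$-soluble, and then $O_{p'}(G)=1$ gives $C_G(O_p(G))\le O_p(G)$ by the Hall--Higman lemma, closing the argument. If $p=2$ the hypothesis additionally controls cyclic subgroups of order $4$, and I would use these together with the structure of a nonabelian chief factor: choosing a cyclic $H$ of order $2$ or $4$ whose image lies in the centre of a Sylow $2$-subgroup of that factor, the partial $S$-$\Pi$-property forces this image to be normal in the corresponding quotient, which is impossible inside a nonabelian chief factor. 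This rules out components, yields $C_G(O_p(G))\le O_p(G)$, and gives the final contradiction.
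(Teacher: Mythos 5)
Your opening reductions ($O_{p'}(E)=1$, $E=G$), your treatment of minimal normal subgroups, and your disposal of the case where $G$ is simple are all correct, and for odd $p$ your route does close: $(|G|,p-1)=1$ forces $|G|$ odd, Feit--Thompson gives solubility, and Hall--Higman yields $C_G(O_p(G))\leq O_p(G)$ --- a far heavier hammer than the paper needs, but valid. The genuine gap is the case $p=2$, where everything rests on your last sentence, which is only a gesture at the hardest part of the proof. To exclude components (equivalently to force $C_G(O_2(G))\leq O_2(G)$) you propose to play a cyclic $H$ of order $2$ or $4$ against a non-abelian chief factor; but the chief series witnessing partial $S$-$\Pi$-property is chosen \emph{after} $H$ is fixed and varies with $H$, so you have no control over where its non-abelian factors $G_i/G_{i-1}$ sit, and the definition only bites at a level where $H\cap G_i\nleq G_{i-1}$. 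Since $|H|\leq 4$, the chain $H\cap G_0\leq\cdots\leq H\cap G_n$ has at most two jumps, and nothing prevents them all from occurring at abelian levels, in which case both alternatives of the definition hold harmlessly and no contradiction arises. (A smaller imprecision: a $2$-power normalizer index gives $(H\cap G_i)G_{i-1}/G_{i-1}\leq O_2(G/G_{i-1})$, not normality; the clash with a non-abelian chief factor still follows, but only once the positioning problem is solved.)

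The paper resolves exactly this difficulty by a reduction you are missing: applying minimality to \emph{maximal} normal subgroups rather than minimal ones. For any chief factor $G/K$, the pair $(K,K)$ satisfies the hypothesis by Lemma \ref{ma1}(1), so $K$ is $p$-nilpotent, hence (as $O_{p'}(G)=1$) a $p$-group contained in $O_p(G)$; therefore $O_p(G)=Z_\infty(G)=Z(G)$ is the \emph{unique} maximal normal subgroup of $G$, and every chief series of $G$ --- in particular the one attached to any chosen $H$ --- necessarily ends $\cdots<Z(G)<G$. With the top factor thus pinned down, the paper finishes by showing $P$ is non-abelian via a transfer argument (the focal subgroup theorem applied to $G=G'$), producing an element $x$ of order $p$ or $4$ outside $Z(G)$ via It\^{o}'s lemma, and then reading off a contradiction from the two alternatives for $\langle x\rangle Z(G)/Z(G)$. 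Incorporating that uniqueness step would both repair your $p=2$ case and let you drop Feit--Thompson, treating all primes uniformly.
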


\begin{proof}
Suppose that this proposition is false, and let $(G,E)$ be a counterexample for which $|G|+|E|$ is  minimal. Now we proceed the proof via the following steps.\smallskip

(1) \textit{$O_{p'}(G)=1$ and $E=G$.}\smallskip

With a similar argument as in steps (1) and (2) of the proof of Proposition \ref{1ma}, we have that $O_{p'}(G)=1$ and $E=G$.

\smallskip
(2) \textit{$Z(G)$ is the unique normal subgroup of $G$ such that $G/Z(G)$ is a $G$-chief factor,  $Z(G)=Z_{\infty}(G)=O_{p}(G)$ and $G^{\mathfrak{N}}=G$.}\smallskip

Let $G/K$ be a $G$-chief factor. Then by Lemma \ref{ma1}(1), $(K,K)$ satisfies the hypothesis. The choice of $(G,E)$ yields that $K$ is $p$-nilpotent, and so $K\leq F_p(G)$. Since $G$ is not $p$-nilpotent, $K=F_p(G)$. This shows that $F_p(G)$ is the unique normal subgroup of $G$ such that $G/F_p(G)$ is a $G$-chief factor. By (1) and Proposition \ref{1miy}, $F_p(G)=O_{p}(G)\leq Z_{\mathfrak{U}}(G)$. As $(|G|, p-1)=1$, we have that $O_{p}(G)\leq Z_{\infty}(G)$, and thereby $Z_{\infty}(G)=O_{p}(G)$ because $Z_{\infty}(G)\leq F_p(G)=O_{p}(G)$. If $G^{\mathfrak{N}}<G$, then $G^{\mathfrak{N}}\leq Z_{\infty}(G)$. This implies that $G$ is $p$-nilpotent, a contradiction. Thus $G^{\mathfrak{N}}=G$. Then by \cite [Chap. IV, Theorem 6.10]{KT}, $Z_{\infty}(G)\leq Z(G)$, and so $Z_{\infty}(G)=Z(G)$.\smallskip

(3) \textit{$P$ is non-abelian.}\smallskip

If $P$ is abelian, then by (1) and \cite[Chap. VI, Satz 14.3]{HB}, $G'\cap Z(G)=O_{p'}(G)=1$. Since $G'=G$ by (2), we have that $Z(G)=1$, and so $G$ is a simple group. Let $x$ be an element of $G$ of order $p$. Then by the hypothesis, either $\langle x\rangle$ is a Sylow $p$-subgroup of $G$ or $|G: N_{G}(\langle x\rangle)|$ is a $p$-number. In the former case, $G$ is $p$-nilpotent by Lemma \ref{pm}, a contradiction. In the latter case, $\langle x\rangle\leq O_p(G)=1$ by (2), also a contradiction. Thus $P$ is non-abelian.\smallskip

(4) \textit{The final contradiction.}\smallskip

Suppose that all cyclic subgroups of $P$ of order $p$ and $4$ are contained in $Z(G)$, then $G$ is $p$-nilpotent by \cite[Chap. IV, Satz 5.5]{HB}. Hence $G$ has an element $x$ of order $p$ or $4$ such that $x\notin Z(G)$.
Then by (2), (3) and the hypothesis, $G$ has a chief series $\Gamma_{G}: 1=G_{0}<G_{1}<\cdots<G_{n-1}=Z(G)<G_{n}=G$ such that for every $G$-chief factor $G_{i}/G_{i-1}$ $(1\leqslant i\leqslant n)$ of $\Gamma_{G}$, either $(\langle x\rangle\cap G_{i})G_{i-1}/G_{i-1}$ is a Sylow $p$-subgroup of $G_{i}/G_{i-1}$ or $|G/G_{i-1}: N_{G/G_{i-1}}((\langle x\rangle\cap G_{i})G_{i-1}/G_{i-1})|$ is a $p$-number.
If $\langle x\rangle Z(G)/Z(G)$ is a Sylow $p$-subgroup of $G/Z(G)$, then $P=\langle x\rangle Z(G)$, and so $P$ is abelian, which contradicts (3). Now assume that $|G: N_{G}(\langle x\rangle Z(G))|$ is a $p$-number.
Then $\langle x\rangle Z(G)\leq O_p(G)=Z(G)$ by (2). This contradiction completes the proof. \end{proof}

Now we are ready to prove Theorem \ref{zh}.\par

\begin{proof}[{\bf Proof of Theorem \ref{zh}.}] Let $p$ be the smallest prime divisor of $|X|$ and $X_p$ a Sylow $p$-subgroup of $X$. If $X_p$ is cyclic, then $X$ is $p$-nilpotent by \cite[10.1.9]{Rob}. Now assume that $X_p$ is not cyclic. Then by Lemma \ref{ma1}(1), Propositions \ref{1ma} and \ref{1mi}, $X$ is also $p$-nilpotent. Let $X_{p'}$ be the normal $p$-complement of $X$. Then $X_{p'}\unlhd G$. If $X_p$ is cyclic, then clearly, $X/X_{p'}\leq Z_{\mathfrak{U}}(G/X_{p'})$. Now assume that $X_p$ is not cyclic. Then it is easy to see that $(G/X_{p'}, X/X_{p'})$ satisfies the hypothesis of Proposition \ref{1may} or Proposition \ref{1miy} by Lemma \ref{ma1}(2). Hence we also have that $X/X_{p'}\leq Z_{\mathfrak{U}}(G/X_{p'})$.

Let $q$ be the second smallest prime divisor of $|X|$. By arguing similarly as above, we obtain that $X_{p'}$ is $q$-nilpotent and $X_{p'}/X_{\{p,q\}'}\leq Z_{\mathfrak{U}}(G/X_{\{p,q\}'})$, where $X_{\{p,q\}'}$ is the normal $q$-complement of $X_{p'}$. The rest can be deduced by analogy. Hence we can obtain that $X\leq Z_{\mathfrak{U}}(G)$. Then by Lemma \ref{FZ}, $E\leq Z_{\mathfrak{U}}(G)$. The theorem is thus proved.\end{proof}

In order to prove Theorem \ref{pc}, we need the following proposition.\par

\begin{Proposition}\label{po} Let $E$ be a $p$-soluble normal subgroup of $G$. Suppose that $E$ has a Sylow $p$-subgroup $P$ such that every maximal subgroup of $P$ satisfies partial $S$-$\Pi$-property in $G$, or every cyclic subgroup of $P$ of prime order or order $4$ $($when $P$ is a non-abelian $2$-group$)$ satisfies partial $S$-$\Pi$-property in $G$. Then $E/O_{p'}(E)\leq Z_{\mathfrak{U}}(G/O_{p'}(E))$.\end{Proposition}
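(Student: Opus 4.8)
The plan is to reduce the $p$-soluble case to the two pairs of propositions already established (Propositions \ref{1may}/\ref{1miy} for normal $p$-subgroups lying in the $\mathfrak{U}$-hypercentre, and Propositions \ref{1ma}/\ref{1mi} for $p$-nilpotency under the coprimality hypothesis $(|E|,p-1)=1$). I would argue by minimal counterexample, choosing $(G,E)$ with $|G|+|E|$ least. The first routine step is to pass to $O_{p'}(E)$: since $O_{p'}(E)\unlhd G$ and $(p,|O_{p'}(E)|)=1$, Lemma \ref{ma1}(2) and (3) show that the hypothesis descends to $\overline{G}=G/O_{p'}(E)$ with $\overline{E}=E/O_{p'}(E)$, and $\overline{E}$ is again $p$-soluble with $O_{p'}(\overline{E})=1$. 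So I may assume from the outset that $O_{p'}(E)=1$, and the goal becomes $E\le Z_{\mathfrak{U}}(G)$.

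With $O_{p'}(E)=1$, $p$-solubility of $E$ forces $F_p(E)=O_p(E)=F^{\ast}_p(E)$ and, crucially, $C_E(O_p(E))\le O_p(E)$, so $O_p(E)$ is a nontrivial normal $p$-subgroup of $G$ that contains its own centralizer in $E$. The key structural step is to show $O_p(E)\le Z_{\mathfrak{U}}(G)$. Here I would apply the relevant pair of propositions directly: in the maximal-subgroup case, Proposition \ref{1may} applied to the normal $p$-subgroup $P\cap O_p(E)=O_p(E)$ of $G$ (noting that every maximal subgroup of a Sylow $p$-subgroup of $O_p(E)$ inherits the property via Lemma \ref{ma1}(1), or arguing on $P$ directly); in the cyclic case, Proposition \ref{1miy} applied to $O_p(E)$. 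Either way one obtains $O_p(E)\le Z_{\mathfrak{U}}(G)$.

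It then remains to climb from $O_p(E)$ to all of $E$. Because $C_E(O_p(E))\le O_p(E)$ and $O_p(E)\le Z_{\mathfrak{U}}(G)$, every $G$-chief factor of $E$ below $O_p(E)$ is $\mathfrak{U}$-central, and the $p$-soluble structure lets one handle the quotient $E/O_p(E)$ inductively: I would verify that $(G/O_p(E), E/O_p(E))$ again satisfies the hypothesis via Lemma \ref{ma1} (with $O_p(E)\le P$, part (2) applies for the maximal-subgroup formulation, and part (3) transfers the maximal-subgroup condition to the quotient), so minimality gives $E/O_p(E)\le Z_{\mathfrak{U}}(G/O_p(E))$, i.e. every $G$-chief factor of $E$ above $O_p(E)$ is also $\mathfrak{U}$-central. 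Combining the two ranges yields that every $G$-chief factor below $E$ is $\mathfrak{U}$-central, whence $E\le Z_{\mathfrak{U}}(G)$.

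The main obstacle I anticipate is the transfer of the hypothesis to the quotient $G/O_p(E)$ in the cyclic-subgroup version: Lemma \ref{ma1} as stated passes maximal subgroups of $P$ nicely to $PN/N$, but the condition on cyclic subgroups of order $p$ or $4$ does not obviously survive factoring out a $p$-subgroup, since images of such cyclic subgroups can collapse. I would circumvent this by not inducting on the quotient in the cyclic case at all: instead, once $O_p(E)\le Z_{\mathfrak{U}}(G)$ is secured, I would invoke the generalized Fitting machinery — since $O_p(E)=F^{\ast}_p(E)$ plays the role that $F^{\ast}$ plays in Lemma \ref{FZ}, one expects a $p$-analogue (the $p$-quasinilpotent radical version of Lemma \ref{FZ}) forcing $E\le Z_{\mathfrak{U}}(G)$ directly from $F^{\ast}_p(E)\le Z_{\mathfrak{U}}(G)$. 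The delicate point will be confirming that $F^{\ast}_p(E)=O_p(E)$ under $O_{p'}(E)=1$ and $p$-solubility, and that the hypercentre-lifting result of \cite{AN2} has the needed $p$-local form.
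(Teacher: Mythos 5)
There is a genuine gap in your maximal-subgroup case, at the key step ``$O_p(E)\leq Z_{\mathfrak{U}}(G)$''. Proposition \ref{1may} requires that every maximal subgroup of the normal $p$-subgroup in question satisfies partial $S$-$\Pi$-property in $G$, but the hypothesis of Proposition \ref{po} supplies this only for maximal subgroups of $P$, the Sylow $p$-subgroup of $E$, and $O_p(E)$ may be properly contained in $P$. A maximal subgroup of $O_p(E)$ need not be recoverable from a maximal subgroup of $P$ (for instance, if $O_p(E)\leq\Phi(P)$ then $P_1\cap O_p(E)=O_p(E)$ for every maximal subgroup $P_1$ of $P$), and Lemma \ref{ma1}(1) only restricts a given subgroup's property to a normal overgroup; it does not manufacture the property for new subgroups. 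Nor can you ``argue on $P$ directly,'' since $P$ is not normal in $G$. The paper avoids this entirely: it first reduces by minimality and Lemma \ref{ma1}(1) to $E=G$ not $p$-supersoluble, with a unique minimal normal subgroup $N\leq O_p(G)$ and $N\nleq\Phi(G)$; it then picks a maximal subgroup $P_1$ of $P$ with $P=P_1N$ and reads off from the chief series through $N$ that $P_1\cap N\unlhd G$, whence $|N|=p$, a contradiction. The application of Proposition \ref{1may} to a normal $P$ is reserved for the $p$-supersoluble configuration, where Lemma \ref{BE} forces $P=O_p(E)\unlhd G$.

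Your cyclic-subgroup case, by contrast, is essentially sound and genuinely different from (and shorter than) the paper's: cyclic subgroups of $O_p(E)$ of order $p$ or $4$ \emph{are} cyclic subgroups of $P$ of that order, so Proposition \ref{1miy} legitimately yields $O_p(E)\leq Z_{\mathfrak{U}}(G)$; and since $p$-solubility together with $O_{p'}(E)=1$ forces $F^{\ast}(E)=F(E)=O_p(E)$ (no components survive, exactly as the paper notes via \cite[Lemma 2.10]{AE}), Lemma \ref{FZ} finishes at once --- no $p$-local variant of the hypercentre-lifting theorem is needed. Do note, however, that your first proposed ``climb'' via induction on $G/O_p(E)$ fails even granting the key step: minimality only puts $E/O_p(E)$ modulo \emph{its own} $p'$-core into the hypercentre, so the $p'$-chief factors between $O_p(E)$ and the preimage of $O_{p'}(E/O_p(E))$ are left unaccounted for. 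Your fallback through the generalized Fitting subgroup is the correct repair for that; the unrepaired defect is the maximal-subgroup case described above.
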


\begin{proof} Suppose that this proposition is false, and let $(G,E)$ be a counterexample for which $|G|+|E|$ is  minimal. With a similar discussion as in step (1) of the proof of Proposition \ref{1ma}, we have that $O_{p'}(E)=1$. If $E$ is $p$-supersoluble, then $E'\leq F(E)=P$ by Lemma \ref{BE}. Hence $E$ is soluble, and so $F^*(E)=F(E)=P$ by \cite[Chap. X, Corollary 13.7(d)]{HB1}. Note that by Propositions \ref{1may} and \ref{1miy}, we have that $P\leq Z_{\mathfrak{U}}(G)$. It follows from Lemma \ref{FZ} that $E\leq Z_{\mathfrak{U}}(G)$, which is impossible. Thus $E$ is not $p$-supersoluble. If $E<G$, then since $(E,E)$ satisfies the hypothesis by Lemma \ref{ma1}(1), $E$ is $p$-supersoluble by the choice of $(G,E)$. This contradiction implies that $E=G$ and $G$ is not $p$-supersoluble.

Firstly suppose that every maximal subgroup of $P$ satisfies partial $S$-$\Pi$-property in $G$.
Let $N$ be a minimal normal subgroup of $G$. Since $G$ is $p$-soluble and $O_{p'}(G)=1$, we see that $N\leq O_p(G)$. By Lemma \ref{ma1}(2), the hypothesis holds for $(G/N,G/N)$, and so the choice of $(G,E)$ implies that $G/N$ is $p$-supersoluble. Then it is easy to see that $N$ is the unique minimal normal subgroup of $G$ and $N\nleq\Phi(G)$. Hence $P$ has a maximal subgroup $P_{1}$ such that $P=P_{1}N$. Then by the hypothesis, $G$ has a chief series $\Gamma_{G}: 1=G_{0}<G_{1}=N<\cdots<G_{n}=G$ such that for every $G$-chief factor $G_{i}/G_{i-1}$ $(1\leqslant i\leqslant n)$ of $\Gamma_{G}$, either $(P_1\cap G_{i})G_{i-1}/G_{i-1}$ is a Sylow $p$-subgroup of $G_{i}/G_{i-1}$ or $|G/G_{i-1}: N_{G/G_{i-1}}((P_1\cap G_{i})G_{i-1}/G_{i-1})|$ is a $p$-number. Since $N\nleq P_1$, we have that $|G: N_{G}(P_1\cap N)|$ is a $p$-number. Hence $P_1\cap N\unlhd G$. It follows that $P_1\cap N=1$, and so $|N|=p$. Thus $G$ is $p$-supersoluble, a contradiction.

Now assume that every cyclic subgroup of $P$ of prime order or order $4$ $($when $P$ is a non-abelian $2$-group$)$ satisfies partial $S$-$\Pi$-property in $G$. Let $G/K$ be a $G$-chief factor. Then $G/K$ is $p$-supersoluble because $G/K$ is a $p$-soluble simple group, and $(K,K)$ satisfies the hypothesis by Lemma \ref{ma1}(1). By the choice of $(G,E)$, $K$ is $p$-supersoluble. Since $O_{p'}(K)\leq O_{p'}(G)=1$, $P\cap K\unlhd G$ by Lemma \ref{BE}. Then by Proposition \ref{1miy}, $P\cap K\leq Z_\frak{U}(G)$. As $G/(P\cap K)$ is $p$-supersoluble, we have that $G$ is $p$-supersoluble. The final contradiction completes the proof.
\end{proof}

\begin{proof}[{\bf Proof of Theorem \ref{pc}.}] Note that $O_{p'}(X)=O_{p'}(E)$. Then by Proposition \ref{po}, we have that $X/O_{p'}(E)\leq Z_{\mathfrak{U}}(G/O_{p'}(E))$, and so $F_p(E)/O_{p'}(E)\leq Z_{\mathfrak{U}}(G/O_{p'}(E))$.
It follows from \cite[Lemma 2.10]{AE} that $F^*(E/O_{p'}(E))=F_p^*(E/O_{p'}(E))=F_p(E)/O_{p'}(E)\leq Z_{\mathfrak{U}}(G/O_{p'}(E))$ because $E$ is $p$-soluble. Hence $E/O_{p'}(E)\leq Z_{\mathfrak{U}}(G/O_{p'}(E))$ by Lemma \ref{FZ}.
\end{proof}

\section{\bf Final Remarks}

In this section, we shall show that the concept of partial $S$-$\Pi$-property can be viewed as a generalization of many known embedding properties. Though some of them are generalized by the concept of partial $\Pi$-property, there are still some embedding properties can only be generalized by the concept of partial $S$-$\Pi$-property as the following proposition illustrates. Hence, as a consequence, a large number of results in former literature can follow directly from our main results.\par

\begin{Proposition}\label{C1} Let $H$ be a $p$-subgroup of $G$. Then $H$ satisfies partial $S$-$\Pi$-property in $G$ if one of the following holds:
\smallskip

$(1)$ $H$ is a generalized $CAP$-subgroup of $G$.
\smallskip

$(2)$ $H$ satisfies partial $\Pi$-property in $G$.
\smallskip

$(3)$ $H$ is $\Pi$-normal\textup{\cite{LB}} in $G$.
\smallskip

$(4)$ $H$ is $\mathfrak{U}_{c}$-normal\textup{\cite{AY}} in $G$.
\smallskip

$(5)$ $H$ is weakly $S$-permutable\textup{\cite{AN}} in $G$.
\smallskip

$(6)$ $H$ is weakly $S$-semipermutable\textup{\cite{LY1}} in $G$.
\smallskip

$(7)$ $H$ is weakly $SS$-permutable\textup{\cite{HE}} in $G$.
\smallskip

$(8)$ $H$ is weakly $\tau$-quasinormal\textup{\cite{VO}} in $G$.
\smallskip

$(9)$ $H$ is $SE$-quasinormal\textup{\cite{CX3}} in $G$.
\smallskip

$(10)$ $H$ is a partial CAP-subgroup \textup{(}or semi CAP-subgroup\textup{)}\textup{\cite{FY}} of $G$.
\smallskip

$(11)$ $H$ is $S$-embedded\textup{\cite{G1}} in $G$.
\smallskip

$(12)$ $H$ is $\mathfrak{U}$-quasinormal\textup{\cite{ML}} in $G$.
\smallskip

$(13)$ $H$ is $\mathfrak{U}_{s}$-quasinormal\textup{\cite{HJ}} in $G$.
\smallskip

$(14)$ $H$ is weakly $S$-embedded\textup{\cite{LJ}} in $G$.
\end{Proposition}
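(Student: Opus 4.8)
The plan is to deduce the fourteen implications from the two facts already recorded in the Introduction, namely that a generalized $CAP$-subgroup (item (1)) and a subgroup with the partial $\Pi$-property (item (2)) each satisfies partial $S$-$\Pi$-property; items (1) and (2) therefore need no argument. I would dispose of the partial/semi $CAP$ case (item (10)) next, essentially by unwinding the definition: if $H$ covers a $G$-chief factor $L/M$ of the given chief series, then $L/M\cong(H\cap L)/(H\cap M)$ is a section of the $p$-group $H$, so $L/M$ is a $p$-group and $(H\cap L)M/M=L/M$ is a Sylow $p$-subgroup of it; if $H$ avoids $L/M$, then $(H\cap L)M/M=1$ and its normaliser is all of $G/M$, so the index clause holds trivially. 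Thus the defining chief series of a semi $CAP$-subgroup is already a witnessing series for partial $S$-$\Pi$-property. For the remaining properties (3)--(9) and (11)--(14) I would first isolate the common structural feature they share: in each case the definition produces a subgroup $T$ of $G$, normal or subnormal (or with $HT$ itself $S$-permutable), such that $G=HT$ and the intersection $H\cap T$ lies in a \emph{core} of $H$ that is $S$-permutable, $S$-semipermutable, or $\tau$-quasinormal in $G$. Extracting this $(T,\,H\cap T)$ data from each definition is the bookkeeping part, and for several of these items the extraction already appears in \cite{CX2}, where a number of such properties are shown to force the partial $\Pi$-property; those items then fall under case (2) verbatim.

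The genuinely new cases are the $S$-permutability-based ones (weakly $S$-permutable \cite{AN}, weakly $S$-semipermutable \cite{LY1}, weakly $SS$-permutable \cite{HE}, weakly $\tau$-quasinormal \cite{VO}, $SE$-quasinormal \cite{CX3}, $S$-embedded \cite{G1}, weakly $S$-embedded \cite{LJ}, and also $\Pi$-normal \cite{LB} and $\mathfrak U_{c}$-normal \cite{AY}); these need not yield the partial $\Pi$-property, as Example~\ref{e2} already shows for a Sylow $p$-subgroup, so they must be verified directly. The key tool is the classical fact that an $S$-permutable $p$-subgroup $K$ of $G$ is subnormal with $O^{p}(G)\le N_{G}(K)$, so that $|G:N_{G}(K)|$ is a $p$-number, and moreover $K$ covers or avoids each $G$-chief factor, covering only $p$-factors (see the basic properties collected in \cite{AN}). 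I would then, in the model case where $T$ is normal, build one witnessing chief series $\Gamma_{G}$ by refining $1\le T\le G$. Since $G/T\cong H/(H\cap T)$ is a $p$-group, every $G$-chief factor $L/M$ above $T$ is a $p$-group covered by $H$, so $(H\cap L)M/M=L/M$ is a Sylow $p$-subgroup of $L/M$; and for each $G$-chief factor below $T$ I would use the cover--avoidance behaviour of the $S$-permutable core together with its $p$-power normaliser index to check the required clause for $(H\cap L)M/M$.

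The step I expect to be the main obstacle is reconciling all of this inside a single chief series when $T$ is only subnormal and the containment $H\cap T\le H_{sG}$ is proper. Subnormality means $T$ is not a term of a chief series, so the clean factorisation $G/T$ being a $p$-group is unavailable; instead I must pass to the normal core $T_{G}$, obtained from the subnormal chain, and control how $H$ meets the $G$-chief factors sandwiched between $T_{G}$ and $G$, where $H$ need not cover cleanly. Likewise, because $H\cap T$ may be strictly smaller than the $S$-permutable core, I cannot apply the normaliser fact to $H\cap T$ directly but must apply it to the core and then confirm, after projecting into the relevant quotient $G/M$, that the induced subgroup $(H\cap L)M/M$ still has $p$-power normaliser index or is a full Sylow $p$-subgroup. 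This is precisely where Lemma~\ref{ma1}, which propagates the partial $S$-$\Pi$-property to subgroups and to quotients by normal subgroups, carries the weight: it lets me normalise the data before checking the chief-factor conditions. Once these compatibility issues are settled the last cases are routine, since the $S$-semipermutable, $SS$-permutable and $\tau$-quasinormal cores all enjoy the same $p$-power normaliser-index and cover--avoidance behaviour that drives the argument, so each reduces to the same verification.
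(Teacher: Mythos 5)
Your treatment of items (1), (2) and (10) is fine, and your plan to fall back on the reductions in \cite{CX2} for several of the remaining items matches what the paper actually does: it disposes of (3)--(8) and (10)--(13) by citing \cite[Lemmas 7.2 and 7.3]{CX2} and only argues (9) and (14) from scratch. Two remarks on your framing, though. First, your claim that the ``$S$-permutability-based'' items need not yield the partial $\Pi$-property, ``as Example~\ref{e2} already shows'', is wrong: the Sylow $5$-subgroup of $A_{5}$ is not weakly $S$-permutable (in a simple group the only subnormal subgroups are $1$ and $G$, so $T=G$ forces $H=H\cap T\le H_{sG}$, i.e.\ $H$ would have to be $S$-permutable, hence subnormal), and similarly it fails the other properties on your list. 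So that example does not force a direct verification; the citation to \cite{CX2} genuinely covers those cases. Your direct argument for the weakly $S$-permutable model case would nevertheless go through, because for $L\le T$ one has $H\cap L=(H\cap T)\cap L=H_{sG}\cap L$, and intersections of $S$-permutable subgroups with normal subgroups stay $S$-permutable with $O^{p}(G)$ in their normaliser.

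The genuine gap is in exactly the two cases the paper has to prove by hand, namely (9) and (14). There the hypothesis controls $H\cap T$ only by $H_{seG}$, the join of the $S$-quasinormally \emph{embedded} subgroups of $H$, and such subgroups are merely Sylow $p$-subgroups of $S$-quasinormal subgroups $X_{i}$: they are in general not subnormal, do not cover or avoid chief factors, and do not have $p$-power normaliser index. (Indeed the Sylow $5$-subgroup of $A_{5}$ is $S$-quasinormally embedded, being a Sylow subgroup of the $S$-quasinormal subgroup $A_{5}$ itself, yet its normaliser has index $6$.) So the ``key tool'' you invoke for the factors below $T$ is simply unavailable here, and your refine-$1\le T\le G$ strategy collapses at that step. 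The paper's proof of (9) and (14) is instead an induction on $|G|$ organised around the normal cores $(X_{i})_{G}$ of the overgroups $X_{i}$: if all $(X_{i})_{G}=1$ then each $H_{i}$ is itself $S$-quasinormal by \cite[Theorems 1.2.14 and 1.2.17]{ABl}, so $H$ is weakly $S$-permutable (resp.\ $S$-embedded) and one quotes \cite{CX2}; otherwise one picks a minimal normal subgroup $N$ inside $O^{p}(G)\cap(X_{1})_{G}$ (where $H\cap N$ is a full Sylow $p$-subgroup of $N$ because $H_{1}$ is a Sylow $p$-subgroup of $X_{1}\ge N$), or, if that intersection is trivial, a minimal normal $p$-subgroup $R\le(X_{1})_{G}$ with $R\le H_{1}\le H$, and applies induction to $G/N$ or $G/R$. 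Some argument of this inductive type is needed; your proposal does not supply it.
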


\begin{proof} Statements (1) and (2) hold by the definition, and statements (3)-(8) and (10)-(13) follow from \cite[Lemmas 7.2 and 7.3]{CX2}.\smallskip

(9) By the definition, $G$ has a subnormal subgroup $T$ such that $G=HT$ and $H\cap T\leq H_{seG}$, where $H_{seG}$ denotes the subgroup generated by all subgroups of $H$ which are $S$-quasinormally embedded in $G$. Then clearly, $O^p(G)\leq T$. Let $H_1,H_2,\cdots ,H_n$ be all subgroups of $H$ which are $S$-quasinormally embedded in $G$. Then there exist $S$-quasinormal subgroups $X_1,X_2,\cdots ,X_n$ of $G$ with $H_i$ is a Sylow $p$-subgroup of $X_i$ ($1\leq i\leq n$). If ${(X_i)}_G=1$ for all $1\leq i\leq n$, then by \cite[Theorems 1.2.14 and 1.2.17]{ABl}, $H_i$ is $S$-quasinormal in $G$ for all $1\leq i\leq n$, and so $H_{seG}=\langle H_1,H_2,\cdots ,H_n\rangle$ is $S$-quasinormal in $G$. Thus $H$ is weakly $S$-permutable in $G$. This implies that $H$ satisfies partial $S$-$\Pi$-property in $G$ by \cite[Lemma 7.3(3)]{CX2}. Hence, without loss of generality, we may assume that ${(X_1)}_G\neq 1$.

Suppose that $O^p(G)\cap {(X_1)}_G\neq 1$. Let $N$ be a minimal normal subgroup of $G$ contained in $O^p(G)\cap {(X_1)}_G$. It is easy to see that $HN/N$ is $SE$-quasinormal in $G/N$ by \cite[Lemma 2.6(3)]{CX3}. By induction, we have that $HN/N$ satisfies partial $S$-$\Pi$-property in $G/N$. Since $N\leq X_1$ and $H_1$ is a Sylow $p$-subgroup of $X_1$, $H_1\cap N$ is a Sylow $p$-subgroup of $N$, and so $H\cap N$ is a Sylow $p$-subgroup of $N$. This shows that $H$ satisfies partial $S$-$\Pi$-property in $G$. Now assume that $O^p(G)\cap {(X_1)}_G=1$. Let $R$ be a minimal normal subgroup of $G$ contained in ${(X_1)}_G$. Then $R\leq O_p(G)$. This implies that $R\leq H_1\leq H$. Since $H/R$ is $SE$-quasinormal in $G/R$ by \cite[Lemma 2.7(2)]{CX3}, we have that $H/R$ satisfies partial $S$-$\Pi$-property in $G/R$ by induction. Therefore, $H$ also satisfies partial $S$-$\Pi$-property in $G$.\smallskip

(14) By the definition, $G$ has a normal subgroup $T$ such that $HT$ is $S$-quasinormal in $G$ and $H\cap T\leq H_{seG}$, where $H_{seG}$ denotes the subgroup generated by all subgroups of $H$ which are $S$-quasinormally embedded in $G$. Let $H_1,H_2,\cdots ,H_n$ be all subgroups of $H$ which are $S$-quasinormally embedded in $G$. Then there exist $S$-quasinormal subgroups $X_1,X_2,\cdots ,X_n$ of $G$ with $H_i$ is a Sylow $p$-subgroup of $X_i$ ($1\leq i\leq n$). Without loss of generality, we assume that $X_i\leq HT$ ($1\leq i\leq n$). If ${(X_i)}_G=1$ for all $1\leq i\leq n$, then $H_{seG}$ is $S$-quasinormal in $G$. Thus $H$ is $S$-embedded in $G$. This yields that $H$ satisfies partial $S$-$\Pi$-property in $G$ by \cite[Lemma 7.2(2)]{CX2}. Hence we may assume that ${(X_1)}_G\neq 1$.

Suppose that $T\cap {(X_1)}_G\neq 1$. Then we can obtain that $H$ satisfies partial $S$-$\Pi$-property in $G$ by arguing similarly as in the proof of (9). Now assume that $T\cap {(X_1)}_G=1$. Let $R$ be a minimal normal subgroup of $G$ contained in ${(X_1)}_G$. Since $R\cap T=1$ and $R\leq HT$, we have that $R\leq H$. With a similar discussion as in the proof of (9), $H$ also satisfies partial $S$-$\Pi$-property in $G$.
\end{proof}

\end{document}